\newtheorem{theorem}{Theorem}
\newtheorem{lemma}{Lemma}
\newtheorem{example}{Example}
\newtheorem*{thA}{Theorem A}
\newtheorem*{thB}{Theorem B}
\newtheorem*{thC}{Theorem C}
\author[N. Gahlian and G. Pant]{Nidhi Gahlian and Garima Pant}
\address{nidhi gahlian; department of mathematics, university of delhi, delhi-110007, india.}
\email{nidhigahlyan81@gmail.com}
\address{garima pant; department of mathematics, university of delhi, delhi-110007, india.}
\email{garimapant.m@gmail.com}
\thanks {Research work of the first author is supported by research fellowship from Department of Science and Technology(INSPIRE), New Delhi, India.}
\thanks {Research work of the second author is supported by research fellowship from University Grants Commission (UGC), New Delhi, India.}
\title[On Hayman conjecture]{On Hayman Conjecture for Paired Complex Delay-Differential Polynomials}
\subjclass[2020]{30D35, 39A05}
\keywords {Meromorphic functions, Hayman conjecture for paired complex polynomials, differential polynomials and delay-differential polynomials}
\begin{document}
	\maketitle
	
	\begin{abstract}
	We study Hayman conjecture for different paired complex polynomials under certain conditions. The zeros distribution of $f^{n}(z)L(g)-a(z)$ and $g^{n}(z)L(f)-a(z)$ was studied by Gao and Liu \cite{gaoliu} for $n\geq 3$. In this paper, we work on the zeros distribution of $f^{2}(z)L(g)-a(z)$ and $g^{2}(z)L(f)-a(z)$, where $a(z)$ is a non-zero small function of both $f(z)$ and $g(z)$, and  $L(h)$ takes the $k$th derivative $h^{(k)}(z)$ or shift $h(z+c)$ or difference $h(z+c)-h(z)$ or delay-difference $h^{(k)}(z+c)$, here $k\geq 1$ and $c$ is a non-zero constant. Moreover, we discuss Hayman conjecture for paired complex differential polynomials when $n=1.$
	\end{abstract}

		               \section{\textbf{Introduction}}

To understand this paper, we must know the basic facts of Nevanlinna's value distribution theory. For a meromorphic function $f$, $n(r,f)$ $N(r,f), m(r,f)$ and  $T(r,f)$ denote un-integrated counting function, integrated counting function, proximity function and characteristic function respectively. We also use first main theorem and second main theorem of Nevanlinna for a meromorphic function $f$, see  \cite{yanglo,hayman,ilaine}. We present elementary definitions of order of growth $\rho(f)$, hyper-order of growth $\rho_{2}(f)$ and exponent of convergence of zeros for a meromorphic function $f$ to make the paper self contained. 
$$\rho(f)=\limsup_{r\to\infty}\frac{\log T(r,f)}{\log r},$$ 
$$\rho_{2}(f)=\limsup_{r\to\infty}\frac{\log \log T(r,f)}{\log r}$$ and 
$$\lambda(f)=\limsup_{r\to\infty}\frac{\log n(r,1/f)}{\log r}=\limsup_{r\to\infty}\frac{\log N(r,1/f)}{\log r}.$$   \
A meromorphic function $g(z)$ is a small function of $f(z)$ if $T(r,g)=S(r,f)$ and converse is also true, here $S(r,f)$ denotes such quantities which are of growth $o(T(r,f))$ as $r\to\infty$, outside of a possible exceptional set of finite linear measure.\\ 
In 2022, Gao and Liu \cite{gaoliu} considered the Hayman conjecture of paired differential polynomials and paired delay-differential polynomials. In particular, they considered the zeros distribution of $f^{n}(z)L(g)-a(z)$ and $g^{n}(z)L(f)-a(z)$, where $n\in\mathbb{N}$, $a(z)$ is a non-zero small function of $f(z)$ and $g(z)$. Also, $L(h)$ holds one of the following conditions:
\begin{enumerate}
\item $L(h)=h^{(k)}(z),k\geq 1$
\item $L(h)=h(z+c)$, $c\in \mathbb{C}\setminus{0}$
\item $L(h)=h(z+c)-h(z)$, $c\in \mathbb{C}\setminus{0}$
\item $L(h)=h^{(k)}(z+c),k\geq 1$ and  $c\in \mathbb{C}\setminus{0}$.
\end{enumerate}	
Let $M$ denotes the function class of all transcendental meromorphic functions, $M'$ denotes function class of transcendental meromorphic functions of hyper-order less than $1$, $E$ denotes function class of all transcendental entire functions and $M'$ denotes function class of transcendental entire functions of hyper-order less than $1$. Then Gao and Liu \cite{gaoliu} proved the following result:
\begin{theorem}\rm\label{gaoliuth}
If $L(h)$ satisfies any one of the following conditions:
\begin{enumerate}
\item $L(h)=h^{(k)}(z), n\geq k+4$ and $h\in M$ or $n\geq 3$ and $h\in E$;
\item $L(h)=h(z+c), n\geq 4$ and $h\in M'$ or $n\geq 3$ and $h\in E'$;
\item $L(h)=h(z+c)-h(z), n\geq 5$ and $h\in M'$ or $n\geq 3$ and $h\in E'$;
\item $L(h)=h^{(k)}(z+c), n\geq k+4$ and $h\in M'$ or $n\geq 3$ and $h\in E'$.
\end{enumerate}
Then at least one of  $f^{n}(z)L(g)-a(z)$ and $g^{n}(z)L(f)-a(z)$ have infinitely many zeros.
\end{theorem}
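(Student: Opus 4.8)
The plan is to argue by contradiction. Suppose both $f^{n}(z)L(g)-a(z)$ and $g^{n}(z)L(f)-a(z)$ have only finitely many zeros. The central identity to exploit is that both functions are built symmetrically in $f$ and $g$, so I would first estimate the Nevanlinna characteristic of each factor. The key inequality is the second main theorem applied to the function $F=f^{n}L(g)$: since $F-a$ has finitely many zeros, $N(r,1/(F-a))=S(r,F)$ (in fact $O(\log r)$ for the entire case, or controlled by the exceptional set for the $M'$ case under the hyper-order-less-than-$1$ hypothesis, which is what makes the logarithmic derivative and difference lemmas available). Applying the second main theorem to $F$ with the three values $0$, $\infty$, and $a$ gives
\begin{equation}
T(r,F)\le \overline{N}(r,F)+\overline{N}\!\left(r,\tfrac{1}{F}\right)+\overline{N}\!\left(r,\tfrac{1}{F-a}\right)+S(r,F),\nonumber
\end{equation}
and the last term is $S(r,F)$ by assumption.

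Next I would bound each surviving counting term. For the pole term $\overline{N}(r,F)$, the poles of $F=f^{n}L(g)$ come from poles of $f$ and poles of $L(g)$; in the entire case these vanish, and in the $M'$ case the difference/shift operators do not increase the pole-counting beyond that of $g$ by the difference analogue of the lemma on the logarithmic derivative. For the zero term $\overline{N}(r,1/F)$, the zeros of $F$ are zeros of $f$ together with zeros of $L(g)$; the point is that $\overline{N}(r,1/f)\le \frac{1}{n}N(r,1/f^{n})$ so a zero of $f$ of high multiplicity is counted with weight only $1/n$, which is where the lower bound on $n$ enters. I would estimate $m(r,L(g)/g)=S(r,g)$ (the logarithmic derivative lemma for derivatives, or the Halburd–Korhonen / Chiang–Feng difference lemma for shifts, valid precisely because $\rho_{2}(g)<1$), so that $T(r,L(g))\le T(r,g)+S(r,g)$ up to a multiple of $\overline{N}(r,g)$. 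Combining these bounds with $T(r,F)\ge nT(r,f)-T(r,L(g))+S(r,f)$ (from $F=f^{n}L(g)$), I would derive an inequality of the shape
\begin{equation}
\Big(n-\text{const}\Big)\,T(r,f)\le C\,T(r,g)+S(r,f)+S(r,g),\nonumber
\end{equation}
where the constant depends on $k$ and on which operator $L$ is (accounting for the $k+4$, $4$, or $5$ thresholds in the meromorphic cases and the $3$ threshold in the entire cases). By symmetry I get the companion inequality with $f$ and $g$ swapped, and adding the two produces a contradiction once $n$ exceeds the stated threshold, since the coefficient of $T(r,f)+T(r,g)$ on the left strictly dominates that on the right.

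The main obstacle I anticipate is controlling the operator $L$ uniformly across the four cases while keeping the pole and ramification counting sharp enough to hit the exact thresholds. For the pure-derivative case the classical Clunie-type lemma and Mohon'ko estimates suffice and the error terms are genuinely $S(r,f)$; but for the shift, difference, and delay-difference cases the replacement of the logarithmic derivative lemma requires the hyper-order condition $\rho_{2}<1$ to guarantee the difference analogue with $S(r,f)$ error terms, and one must be careful that $T(r,g(z+c))=T(r,g)+S(r,g)$ and the analogous difference estimates hold outside an exceptional set of finite linear measure. The delicate bookkeeping is in the ramification term $\overline{N}(r,1/L(g))$ versus $N(r,1/g)$: a derivative of order $k$ can create zeros, and the second main theorem must be applied so that the $k$ extra units in the threshold $n\ge k+4$ are exactly absorbed. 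I would handle this by applying the second main theorem to $L(g)$ itself (or to $g$ with a weighted counting of multiple zeros) and tracking how derivatives shift zero multiplicities, which is the step where the precise numeric bounds are won or lost.
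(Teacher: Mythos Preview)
This theorem is not proved in the present paper: it is quoted from Gao and Liu \cite{gaoliu} as background, and the paper's own contributions are Theorems~A, B, and C, which sharpen it under extra hypotheses. So there is no proof here to compare against directly.

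That said, your outline matches the method the paper does use for Theorems~A and~B, which in turn follows Gao--Liu. The paper writes $F=f^{n}L(g)-a$ and starts from $nT(r,f)\le T(r,F+a)+T(r,1/L(g))+O(1)$, then applies the second main theorem for small functions to $F+a$ to bound $T(r,F+a)$ by $N(r,F+a)+N(r,1/(F+a))+N(r,1/F)+S(r,F)$. After estimating the first two counting functions (via Lemma~\ref{yangyilem} for derivatives, Lemma~\ref{llylem} and Lemma~\ref{llylemma} for shifts) and $T(r,1/L(g))$ by $T(r,g)+S(r,g)$, one obtains an inequality of the form $(n-c_1)T(r,f)-c_2T(r,g)\le N(r,1/F)+S(r,f)+S(r,g)$; the symmetric inequality for $G=g^{n}L(f)-a$ is added to it, yielding $T(r,f)+T(r,g)\le N(r,1/F)+N(r,1/G)+S(r,f)+S(r,g)$ once $n$ meets the stated threshold. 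This forces at least one of $F$, $G$ to have infinitely many zeros. Your contradiction framing is equivalent; the only cosmetic difference is that the paper does not assume finiteness of zeros up front but derives the counting-function inequality directly. Your identification of where each numerical threshold ($k+4$, $4$, $5$, $3$) enters---through the pole and zero bookkeeping for $L(g)$---is correct, and the hyper-order condition is indeed exactly what licenses the shift/difference estimates.
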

In the same paper they raised a question namely `Question $1$': Can we reduce $n\geq 3$ to $n\geq 2$ for entire functions $f$ and  $g$ in $E$ or $E'$?  And what is the sharp value of $n$ for meromorphic functions $f$ and  $g$ in $M$ or $M'?$ We give partial answer to this question.\\
Let $M^*$ denotes the function class of transcendental meromorphic function $f$ such that $N(r,f)+N(r,1/f)=S(r,f)$ and $M^{**}$ denotes the function class of transcendental meromorphic function $f$ such that  $N(r,f)+N(r,1/f)=S(r,f)$ with $\rho_{2}(f)<1$. Then the following results hold:

\begin{thA}\rm\label{mainth1}
Suppose that $f$ and $g$ are functions from $M^*$ class, then at least one of $f^{2}(z)g^{(k)}(z)-a(z)$ and $g^{2}(z)f^{(k)}(z)-a(z)$ have infinitely many zeros, where $k\geq 1$, $a(z)$ is a small function of $f(z)$ and $g(z)$.
\end{thA}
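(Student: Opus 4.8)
The plan is to argue by contradiction. Suppose both $f^{2}(z)g^{(k)}(z)-a(z)$ and $g^{2}(z)f^{(k)}(z)-a(z)$ have only finitely many zeros. The goal is to derive a contradiction with $f,g$ being transcendental meromorphic functions in the class $M^{*}$, where $N(r,f)+N(r,1/f)=S(r,f)$ and likewise for $g$. The key structural advantage of working in $M^{*}$ is that $f$ and $g$ have essentially no poles and no zeros in the Nevanlinna sense, so by the logarithmic derivative lemma the quantities $m(r,f^{(k)}/f)$, $m(r,g^{(k)}/g)$, $N(r,f^{(k)})$, $N(r,1/f^{(k)})$ and their $g$-counterparts are all $S(r,f)$ or $S(r,g)$; this lets me move derivatives around freely without paying in the characteristic function.

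First I would set $F=f^{2}g^{(k)}$ and $G=g^{2}f^{(k)}$. Since $a$ is a small function and $F-a$, $G-a$ have finitely many zeros, I can write $F-a=p_{1}e^{\alpha}$ and $G-a=p_{2}e^{\beta}$ where $p_{1},p_{2}$ are (essentially) small functions built from the finite zero/pole data and $e^{\alpha},e^{\beta}$ are the non-vanishing parts; in the $M^{*}$ setting the finitely-many-zeros hypothesis combined with $N(r,1/F)=S(r,f)+S(r,g)$ forces strong constraints. The natural next step is to estimate $T(r,F)$ and $T(r,G)$ from below. Because $N(r,F)=S$ and $N(r,1/(F-a))=S$, the second main theorem applied to $F$ with the three small targets $0,a,\infty$ would normally give $T(r,F)\le N(r,1/F)+N(r,1/(F-a))+N(r,F)+S(r,F)$, and here all the counting terms on the right are small, which should pin down $F$ up to the exponential factor. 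I would then compute $T(r,F)$ directly: since $N(r,1/f)=N(r,f)=S(r,f)$, we get $T(r,f^{2})=2T(r,f)+S(r,f)$ and $T(r,g^{(k)})=T(r,g)+S(r,g)$ via the $M^{*}$ hypothesis, so $T(r,F)$ compares with $2T(r,f)+T(r,g)$. Symmetrically $T(r,G)$ compares with $2T(r,g)+T(r,f)$.

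The crucial manipulation is to differentiate the relation $f^{2}g^{(k)}=a+p_{1}e^{\alpha}$ logarithmically, or equivalently to form the Wronskian-type combination of $F$ and $a$, producing an auxiliary identity of the shape
\begin{equation*}
2\frac{f'}{f}+\frac{(g^{(k)})'}{g^{(k)}}=\frac{a'+(p_{1}e^{\alpha})'}{a+p_{1}e^{\alpha}},
\end{equation*}
whose left side has proximity function $S(r,f)+S(r,g)$ by the lemma on the logarithmic derivative. Combining this with the symmetric identity for $G$ and the lower bounds on $T(r,F),T(r,G)$ should yield a linear relation forcing $T(r,f)=S(r,f)$ or $T(r,g)=S(r,g)$, contradicting transcendence. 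The main obstacle I anticipate is controlling the exponential factors $e^{\alpha},e^{\beta}$: the finitely-many-zeros assumption gives non-vanishing functions, but I must show that $\alpha,\beta$ cannot conspire to keep both $F-a$ and $G-a$ zero-free while $f,g$ remain transcendental. I expect this to require playing the two identities against each other, eliminating the exponentials by taking a suitable ratio $F/G$ or $FG$ and using that $f^{2}g^{(k)}\cdot g^{2}f^{(k)}=(fg)^{2}f^{(k)}g^{(k)}$ has controllable characteristic, so that the product or quotient structure collapses the degrees of freedom in $\alpha$ and $\beta$ and delivers the contradiction.
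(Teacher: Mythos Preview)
Your plan contains the right core ingredient --- the second main theorem for $F=f^{2}g^{(k)}$ with targets $0,a,\infty$, combined with the $M^{*}$ hypothesis to make $N(r,F)$ and $N(r,1/F)$ small --- but there is a genuine gap, and the second half of the outline is a detour.

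The gap is the lower bound on $T(r,F)$. You write that $T(r,f^{2})=2T(r,f)$ and $T(r,g^{(k)})=T(r,g)+S(r,g)$, and then assert that ``$T(r,F)$ compares with $2T(r,f)+T(r,g)$''. But the characteristic is only \emph{sub}additive on products: $T(r,f^{2}g^{(k)})\le T(r,f^{2})+T(r,g^{(k)})$, and nothing prevents substantial cancellation in the other direction. Without a lower bound of the form $T(r,F)\ge cT(r,f)-T(r,g)+S$ (or similar), the second main theorem inequality $T(r,F)\le S(r,f)+S(r,g)+N(r,1/(F-a))$ does not by itself contradict transcendence of $f$ or $g$. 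The paper supplies exactly this missing step in one line: from $f^{2}=(F+a)/g^{(k)}$ one gets
\[
2T(r,f)\le T(r,F+a)+T\bigl(r,1/g^{(k)}\bigr)+O(1)\le T(r,F+a)+T(r,g)+S(r,g),
\]
the last inequality using the $M^{*}$ hypothesis on $g$. Feeding the second main theorem bound for $T(r,F+a)$ into this yields $2T(r,f)-T(r,g)\le N\bigl(r,1/(f^{2}g^{(k)}-a)\bigr)+S$, and adding the symmetric inequality finishes the proof directly.

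Once that lower bound is in place, everything you propose after it --- the factorisations $F-a=p_{1}e^{\alpha}$, the logarithmic differentiation, the elimination of $\alpha$ and $\beta$ via $FG$ or $F/G$ --- is unnecessary. The contradiction (or rather, the quantitative inequality implying infinitely many zeros) already follows from the second main theorem plus the one-line lower bound above; no Wronskian identities or exponential bookkeeping are needed.
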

 
\begin{example}\rm
Let $f(z)=e^{z}$, $g(z)=e^{-z}$ and $a(z)$ be any non-zero polynomial, then $f^{2}(z)g^{'}(z)-a(z)$ and $g^{2}(z)f^{'}(z)-a(z)$ both have infinitely many zeros.  
\end{example}

 \begin{thB}\rm\label{mainth2}
 Suppose that $f$ and $g$ are functions from $M^{**}$ class, $a(z)$ is a small function of $f(z)$ and $g(z)$, $c$ is a non-zero complex number and $L(h)$ satisfies any one of the following conditions:
 \begin{enumerate}
 \item $L(h)=h(z+c)$;
 \item $L(h)=h(z+c)-h(z)$;
 \item $L(h)=h^{(k)}(z+c), k\geq 1$.
 \end{enumerate}
Then at least one of $f^{2}(z)L(g)-a(z)$ and $g^{2}(z)L(f)-a(z)$ have infinitely many zeros. 
 \end{thB}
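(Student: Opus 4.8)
The plan is to prove Theorem B by contradiction, treating the three cases for $L(h)$ in a unified way wherever possible, since the structure of the argument is the same and only the lemmas controlling the logarithmic derivative or shift differ. Suppose, for contradiction, that both $f^{2}(z)L(g)-a(z)$ and $g^{2}(z)L(f)-a(z)$ have only finitely many zeros. First I would set up the second main theorem machinery for the two auxiliary functions $F(z)=f^{2}(z)L(g)$ and $G(z)=g^{2}(z)L(f)$. The key structural observation is that since $f,g\in M^{**}$ we have $N(r,f)+N(r,1/f)=S(r,f)$ and likewise for $g$, with hyper-order strictly below $1$. The hyper-order condition $\rho_{2}(f)<1$ is precisely what is needed to invoke the difference analogue of the logarithmic derivative lemma (the Halburd--Korhonen / Chiang--Feng estimate), so that $m(r,L(g)/g)=S(r,g)$ in all three cases $L(g)=g(z+c)$, $g(z+c)-g(z)$, and $g^{(k)}(z+c)$. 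This gives $T(r,L(g))=m(r,L(g))+N(r,L(g))\le m(r,L(g)/g)+m(r,g)+N(r,L(g))$, and the pole and zero deficiency of $g$ forces $N(r,L(g))=S(r,g)$, so that $L(g)$ is essentially governed by $m(r,g)=T(r,g)+S(r,g)$.

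Next I would estimate the characteristic functions of $F$ and $G$ from below and above. From the definition, $T(r,F)\le 2T(r,f)+T(r,L(g))+S(r,f)$, and using the pole/zero-free property of $f$ together with the fact that $N(r,1/L(g))$ is controlled (zeros of $L(g)$ contribute, but the counting of distinct zeros can be bounded via the second main theorem for three small targets $0,\infty,a$), one obtains a lower bound of the shape $2T(r,f)+T(r,g)\lesssim N(r,1/F)+S$. The assumption that $F-a$ has finitely many zeros means $N(r,1/(F-a))=S(r,F)$; applying the second main theorem to $F$ with the three small functions $0$, $\infty$, and $a(z)$ yields
\begin{equation*}
T(r,F)\le \overline{N}(r,F)+\overline{N}(r,1/F)+\overline{N}(r,1/(F-a))+S(r,F).
\end{equation*}
Here $\overline{N}(r,F)=S$ because $f$ and $g$ have few poles (so $L(g)$ has few poles too), and I would need to show $\overline{N}(r,1/F)=S$ as well: zeros of $F=f^{2}L(g)$ come only from zeros of $L(g)$ since $f$ is zero-free, and the count of these is tied back to $T(r,g)$ through the logarithmic derivative estimate. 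The symmetric inequality holds for $G$. Adding the two inequalities and cancelling the $T(r,f)+T(r,g)$ contributions should produce a contradiction of the form $T(r,f)+T(r,g)\le S(r,f)+S(r,g)$.

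The main obstacle, and the step requiring the most care, is controlling the reduced counting function $\overline{N}(r,1/L(g))$ of the zeros of the shift or delay-difference $L(g)$, and relating it back to $T(r,g)$ without losing the factor that makes the $n=2$ case work. Unlike the pure-derivative situation of Theorem A, where $\overline{N}(r,1/g^{(k)})$ is handled by the classical lemma on the logarithmic derivative, here I must use the difference second main theorem valid under $\rho_{2}(g)<1$, and I expect the bookkeeping of the $S(r,g)$ versus $S(r,f)$ error terms across the two coupled functions $f$ and $g$ to be delicate, since a zero of $L(f)$ is a zero of $G$ but carries information about $f$, not $g$. The resolution is to exploit the near-cancellation coming from $N(r,1/f)+N(r,f)=S(r,f)$: this removes the dominant pole and zero contributions for both functions simultaneously, so that the coupling of the two inequalities closes up. I would organise the final contradiction so that the ineliminable characteristic terms $2T(r,f)$ and $2T(r,g)$ appear on the left while everything provably $S$ appears on the right, forcing both characteristics to be small functions of themselves, which is impossible for transcendental $f$ and $g$.
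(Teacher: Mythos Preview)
Your plan is essentially the paper's proof, recast as a contradiction argument: set $F=f^{2}L(g)$, apply the second main theorem for three small targets $0,a,\infty$, kill $\overline N(r,F)$ and $\overline N(r,1/F)$ using the $M^{**}$ hypothesis, extract $2T(r,f)-T(r,g)\le \overline N\bigl(r,\tfrac{1}{F-a}\bigr)+S$, symmetrise, and add. The paper does exactly this (writing the final inequality directly rather than assuming finitely many zeros first), so the logical content is the same.

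One point worth tightening: you flag the control of $\overline N(r,1/L(g))$ as ``the main obstacle'' and propose invoking a difference second main theorem. In fact no such tool is needed. The paper dispatches this in its Lemma~\ref{llyimplem} using only (i) the elementary shift estimate $N(r+|c|,1/g)\le N(r,1/g)+S(r,g)$ valid under $\rho_{2}(g)<1$ (Lemma~\ref{llylem}), (ii) the hypothesis $N(r,1/g)=S(r,g)$, and, for $L(g)=g(z+c)-g(z)$, the ratio trick $N\bigl(r,\tfrac{1}{g(z+c)-g(z)}\bigr)\le N(r,1/g)+T\bigl(r,\tfrac{g(z+c)}{g(z)}\bigr)$ combined with the difference logarithmic-derivative lemma $m\bigl(r,\tfrac{g(z+c)}{g(z)}\bigr)=S(r,g)$. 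So what you anticipated as delicate is actually a direct consequence of the class $M^{**}$; once you see this, the phrase ``tied back to $T(r,g)$'' should be sharpened to ``equal to $S(r,g)$'', which is precisely what makes the $n=2$ bookkeeping close.
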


\begin{example}\rm
Let $f(z)=e^{z}$, $g(z)=e^{-z}$, $a(z)$ be any non-zero polynomial and $c$ is a non-zero number, then
\begin{enumerate}[(i)]
\item $f^{2}(z)g(z+c)-a(z)$ and $g^{2}(z)f(z+c)-a(z)$ both have infinitely many zeros.
\item $f^{2}(z)(g(z+c)-g(z))-a(z)$ and $g^{2}(z)(f(z+c)-f(z))-a(z)$ both have infinitely many zeros.
\item  $f^{2}(z)g^{(k)}(z+c)-a(z)$ and $g^{2}(z)f^{(k)}(z+c)-a(z)$ both have infinitely many zeros.
\end{enumerate} 
\end{example}

It was observed that Theorem \ref{gaoliuth} is not true for $n=1$, see \cite [Remark 1.2]{gaoliu}. As we can see that if $f(z)=ze^{4z}$ and $g(z)=z^{2}e^{-4z}$, then $f(z)g^{'}(z)-a(z)$ and $g(z)f^{'}(z)-a(z)$ both have finitely many zeros provided $a(z)$ is any polynomial in $z$ except $2z^{2}-4z^{3}$ and $z^{2}+4z^{3}$. Next we are going to study the $(1)$ part of Theorem \ref{gaoliuth} for the case when $n=1$ and $k=1$. The following result shows that under the certain conditions on $f$ and $g$, we get the same conclusion as in Theorem \ref{gaoliuth} for $n=1$. Before stating the result, let 
$F^{*}=\{f=\alpha(z)e^{P(z)}-k: \alpha(z)$ and $P(z)$ are non-zero polynomial and non-constant polynomial respectively, and $k\in \mathbb{C}\setminus\{0\}\}$.

\begin{thC}\rm\label{mainth3}
Let $f$ and $g$ be the functions from $F^{*}$ class, then at least one of $fg^{'}-a(z)$ and $gf^{'}-a(z)$ have infinitely many zeros, where $a(z)$ is a non-zero polynomial.
\end{thC}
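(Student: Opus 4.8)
The plan is to argue by contradiction, after first converting the two delay-differential expressions into exponential sums with polynomial coefficients. Writing $f=\alpha e^{P}-k$ and $g=\beta e^{Q}-m$ with $\alpha,\beta$ nonzero polynomials, $P,Q$ non-constant polynomials and $k,m\in\mathbb{C}\setminus\{0\}$, differentiation gives $f'=(\alpha'+\alpha P')e^{P}$ and $g'=(\beta'+\beta Q')e^{Q}$. Setting $v=\alpha'+\alpha P'$ and $u=\beta'+\beta Q'$ (both nonzero, since $u\equiv 0$ would force $g'\equiv 0$, impossible as $Q$ is non-constant), a direct computation yields
$$fg'-a=\alpha u\,e^{P+Q}-ku\,e^{Q}-a,\qquad gf'-a=\beta v\,e^{P+Q}-mv\,e^{P}-a.$$
Both are entire of finite order, so assuming for contradiction that each has only finitely many zeros, Hadamard factorization lets me write $fg'-a=R_1e^{S_1}$ and $gf'-a=R_2e^{S_2}$ with $R_i$ nonzero polynomials and $S_i$ polynomials.

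The engine will be the Borel-type lemma: if $\sum_j P_j e^{\gamma_j}\equiv 0$ with $P_j$ polynomials and $\gamma_j$ polynomials whose pairwise differences are all non-constant, then every $P_j\equiv 0$. First I would check that $S_1,S_2$ are non-constant: if $S_1$ were constant then $fg'-a$ would be a polynomial, and the Borel lemma applied to the surviving identity (whose $e^{Q}$-term has nonzero coefficient $-ku$ because $k\neq 0$) gives a contradiction. The argument then splits according to whether $P+Q$ is constant.

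If $P+Q$ is non-constant, I read $\alpha u\,e^{P+Q}-ku\,e^{Q}-a-R_1e^{S_1}\equiv 0$ as a four-term exponential identity with exponents $P+Q,\,Q,\,0,\,S_1$. The differences $P$, $P+Q$, $Q$ are all non-constant, so to avoid the Borel lemma forcing the nonzero coefficient $\alpha u$ to vanish, $S_1$ must agree, up to an additive constant, with $P+Q$ or with $Q$. Matching $P+Q$ collapses the identity to three genuine terms and forces $ku\equiv 0$; matching $Q$ forces $\alpha u\equiv 0$. Both contradict $k\neq 0$, $\alpha\not\equiv 0$, $u\not\equiv 0$, so in this regime $fg'-a$ already has infinitely many zeros by itself, which suffices.

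The delicate case, and what I expect to be the main obstacle, is $P+Q\equiv c_0$ (so $Q=-P+c_0$, $Q'=-P'$), where the leading exponentials cancel and the \emph{pairing} of the two equations becomes essential. Here $fg'-a=(\alpha u\,e^{c_0}-a)-ku\,e^{Q}$, and running the Borel lemma against the exponents $0,\,Q,\,S_1$ shows that finiteness of zeros is possible only when the polynomial part vanishes, i.e. $a=\alpha u\,e^{c_0}$; symmetrically, finiteness for $gf'-a$ forces $a=\beta v\,e^{c_0}$. Equating the two and using $Q'=-P'$ gives $\alpha\beta'-\alpha'\beta=2\alpha\beta P'$, that is $\tfrac{\beta'}{\beta}-\tfrac{\alpha'}{\alpha}=2P'$, whence $\beta/\alpha=C e^{2P}$ for some constant $C\neq 0$. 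This is absurd, as the left side is rational while $e^{2P}$ is transcendental, and the contradiction finishes the proof. I anticipate the bookkeeping in this last case—verifying that only the constant term can obstruct finiteness, and cleanly extracting the logarithmic-derivative identity—to require the most care, since it is precisely where the hypotheses $k,m\neq 0$ and the polynomiality (rather than transcendence) of $\alpha,\beta$ are used.
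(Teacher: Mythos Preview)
Your argument is correct and complete, but it proceeds along a different line from the paper. The paper never argues by contradiction or invokes Hadamard factorization; instead it writes $fg'-a$ and $gf'-a$ as exponential polynomials and applies the Steinmetz--type result of Wen--Heittokangas--Laine (the paper's Lemma~\ref{whllem}), which gives $m(r,1/\Phi)=o(r^s)$ whenever the ``constant'' coefficient $A_0$ of the exponential polynomial $\Phi$ is nonzero, and hence $N(r,1/\Phi)\sim T(r,\Phi)$, forcing infinitely many zeros directly. Their case split is by $\deg P$ versus $\deg Q$ and then, in the equal-degree case, by the leading coefficients; your dichotomy ``$P+Q$ non-constant versus constant'' is cleaner and isolates the genuinely delicate situation in one step. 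In the delicate case the two proofs converge: the paper shows $H_4\equiv a$ and $J_4\equiv a$ cannot hold simultaneously because that would force $\alpha_2/\alpha_1$ to equal a transcendental exponential (the paper writes $\alpha_2-\alpha_1$, evidently a slip), which is exactly your $\beta/\alpha=Ce^{2P}$ contradiction. One incidental difference: in every non-delicate subcase the paper's method actually shows that \emph{both} $fg'-a$ and $gf'-a$ have infinitely many zeros, whereas your contradiction framework only delivers ``at least one''; this is more than the theorem asks, but worth noting. Your approach has the advantage of relying only on Hadamard factorization and the classical Borel lemma rather than on the more specialized Lemma~\ref{whllem}.
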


\begin{example}\rm
Let $f(z)=e^{2z}+3$, $g(z)=e^{-2z}+4$ and $a(z)$ be any non-zero polynomial then at least one of $fg^{'}-a(z)$ and $gf^{'}-a(z)$ have infinitely many zeros.
\end{example}

Prior to Theorem \ref{gaoliuth}, Hayman \cite{hayman1} proved some results regarding zero distribution of complex differential polynomial in $1959$. He also presented a conjecture which is as follows: 	
\begin{theorem}\rm\label{haymanconjecture}
Suppose that $f$ is a transcendental meromorphic function then $f^{n}(z)f^{'}(z)-c$ has infinitely many zeros, where $n$ is a positive integer and $c$ is any non-zero complex number.
\end{theorem}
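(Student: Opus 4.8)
The plan is to argue by contradiction using Nevanlinna's second main theorem, the same machinery used throughout this paper. Write $\psi = f^{n}f'$ and suppose, contrary to the assertion, that $\psi - c$ has only finitely many zeros, so that $N\left(r, \frac{1}{\psi - c}\right) = O(\log r) = S(r,f)$. Since $f$ is transcendental meromorphic, $S(r,f)$ is a genuine error term, and the whole proof reduces to showing that this assumption forces an inequality of the form $T(r,f) \le \kappa(n)\,T(r,f) + S(r,f)$ with $\kappa(n) < 1$, which is impossible for transcendental $f$.

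First I would record the structural facts that drive the estimate. From $\psi = f^{n}f'$ one has the clean relation
\[ \frac{1}{f^{n}} = \frac{f'}{\psi}, \]
so that $n\,m(r,1/f) = m(r, f'/\psi)$; splitting $f'/\psi$ through $\psi - c$ and applying the lemma on the logarithmic derivative to the quotients $\psi'/\psi$, $\psi'/(\psi-c)$ and $f'/f$ bounds the right-hand side by $m\left(r, \frac{1}{\psi - c}\right)$ plus $S(r,f)$, which is $S(r,f)$ under our hypothesis. Next, multiplicities produce a gain: at a pole of $f$ of order $p$ the function $\psi$ has a pole of order $(n+1)p+1$, while a zero of $f$ of order $m$ is a zero of $\psi$ of order $(n+1)m-1 \ge n$; hence counting \emph{distinct} poles and zeros loses a multiplicity factor that shrinks like $1/n$, giving $\overline{N}(r,\psi) \le \frac{1}{n+2}N(r,\psi) + S(r,f)$ and an analogous gain for $\overline{N}(r,1/\psi)$. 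I would then apply the second main theorem to $\psi$ with the three targets $0$, $c$, $\infty$,
\[ T(r,\psi) \le \overline{N}(r,\psi) + \overline{N}\left(r, \frac{1}{\psi}\right) + \overline{N}\left(r, \frac{1}{\psi - c}\right) + S(r,\psi), \]
discard the last term by hypothesis, feed in the multiplicity gains, and re-express everything through $T(r,f)$ via the one-sided bounds $nT(r,f) \le T(r,\psi) + S(r,f)$ and $T(r,\psi) \le (n+1)T(r,f) + S(r,f)$. A short computation shows that the surviving coefficient $\kappa(n)$ is strictly below $1$ precisely when $n \ge 3$, which yields the contradiction in that range.

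The hard part is the small-$n$ regime. For $n \ge 3$ the multiplicity gain is large enough to beat the $\overline{N}(r,f)$ and $\overline{N}(r,1/f)$ contributions, but for $n = 2$ and especially $n = 1$ the naive bookkeeping only gives $\kappa(n) \ge 1$, so the inequality above is inconclusive. For $n = 2$ I would follow Mues's refinement, tracking the simple zeros of $\psi$ and of $f'$ more carefully to recover a usable deficit. For $n = 1$, where $\psi = ff' = \tfrac{1}{2}(f^{2})'$, the second-main-theorem count alone genuinely fails for meromorphic $f$; here I would instead appeal either to the refined Nevanlinna argument of Chen and Fang or to the rescaling approach of Bergweiler and Eremenko, applying Zalcman's lemma to reduce the problem to the value distribution of the derivative of a normalized limit function together with the Pang--Zalcman normality criterion. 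I expect this $n=1$ meromorphic case to be the principal obstacle, since it is exactly the point at which the elementary counting breaks down and one is forced to leave pure Nevanlinna theory for rescaling and normal-family methods.
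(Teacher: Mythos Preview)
The paper does not prove this statement at all; it merely records Hayman's conjecture as Theorem~\ref{haymanconjecture} and immediately remarks that ``the above conjecture has been proved completely, see \cite{hayman1,mues,be}.'' There is therefore no in-paper proof to compare against. Your proposal is in fact an accurate summary of precisely those three cited works: the second-main-theorem count with the multiplicity gain at poles and zeros of $\psi=f^nf'$ is Hayman's original argument in \cite{hayman1}, valid for $n\ge 3$; the sharper bookkeeping for $n=2$ is Mues \cite{mues}; and the rescaling/normal-family argument you sketch for $n=1$ is exactly the Bergweiler--Eremenko contribution \cite{be}. So your outline matches the literature the paper defers to, and there is nothing further to compare.
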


The above conjecture has been proved completely, see \cite{hayman1,mues,be}. After Hayman, Laine and
Yang \cite[Theorem 2]{laineyang} studied the zero distribution of complex difference polynomials and proved the following result.

\begin{theorem}\rm
Suppose that $f(z)$ is a transcendental entire function of finite order and $c$ is a non-zero constant, then  then $f^{n}(z)f(z+c)-a$ has infinitely many zeros, where $n\geq 2$ and $a$ is a non-zero constant.
\end{theorem}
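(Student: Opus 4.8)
\emph{Proof proposal.} The plan is to reduce the assertion to a single application of Nevanlinna's second main theorem, after first establishing a \emph{sharp} comparison between the characteristics of $F:=f^{n}(z)f(z+c)$ and of $f$. Throughout I would exploit that $f$ is transcendental entire of finite order, so that the difference analogue of the lemma on the logarithmic derivative is available:
$$m\Bigl(r,\tfrac{f(z+c)}{f(z)}\Bigr)=S(r,f),\qquad m\Bigl(r,\tfrac{f(z)}{f(z+c)}\Bigr)=S(r,f),$$
and consequently $T(r,f(z+c))=T(r,f)+S(r,f)$ and $N(r,1/f(z+c))=N(r,1/f)+S(r,f)$.

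First I would pin down the growth of $F$. Since $f$ is entire, $F$ is entire and $T(r,F)=m(r,F)$. Writing $F=f^{n+1}\cdot\frac{f(z+c)}{f(z)}$ gives at once the upper bound $T(r,F)\le (n+1)T(r,f)+S(r,f)$. For the matching lower bound — the crucial step — I would instead write $f^{n+1}=F\cdot\frac{f(z)}{f(z+c)}$, so that
$$(n+1)\,T(r,f)=m(r,f^{n+1})\le m(r,F)+m\Bigl(r,\tfrac{f(z)}{f(z+c)}\Bigr)=T(r,F)+S(r,f).$$
Hence $T(r,F)=(n+1)T(r,f)+S(r,f)$; in particular $T(r,F)$ and $T(r,f)$ have the same order of growth, so $S(r,F)=S(r,f)$.

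Next I would argue by contradiction: suppose $F-a$ has only finitely many zeros. As $f$ has finite order, so does $F$, and then $N(r,1/(F-a))=O(\log r)=S(r,f)$. Applying the second main theorem to $F$ with the three distinct targets $0,a,\infty$ (here $a\neq 0$ is used to keep $0$ and $a$ distinct), and using that $F$ entire forces $\overline N(r,F)=0$, yields
$$T(r,F)\le \overline N\bigl(r,1/F\bigr)+\overline N\bigl(r,1/(F-a)\bigr)+S(r,F).$$
Because every zero of $F$ is a zero of $f$ or of $f(z+c)$, one has $\overline N(r,1/F)\le N(r,1/f)+N(r,1/f(z+c))\le 2T(r,f)+S(r,f)$. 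Substituting the growth estimate of the previous paragraph together with $\overline N(r,1/(F-a))=S(r,f)$ gives $(n+1)T(r,f)\le 2T(r,f)+S(r,f)$, that is $(n-1)T(r,f)\le S(r,f)$, which is impossible for transcendental $f$ as soon as $n\ge 2$.

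The main obstacle is precisely the lower bound $T(r,F)\ge (n+1)T(r,f)+S(r,f)$: the naive triangle inequality only delivers $T(r,F)\ge (n-1)T(r,f)+S(r,f)$, which would force the far weaker range $n\ge 4$. Recovering the full factor $n+1$ is exactly what makes the threshold $n\ge 2$ work, and it rests on the finite-order hypothesis through the estimate $m(r,f(z)/f(z+c))=S(r,f)$. A secondary point to treat carefully is that the reduced counting function $\overline N(r,1/F)$ cannot detect the high multiplicities produced by the factor $f^{n}$, so the bound $\overline N(r,1/F)\le 2T(r,f)+S(r,f)$ is essentially best possible; it is the sharpness of the characteristic estimate, rather than of the zero count, that drives the argument.
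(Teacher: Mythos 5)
Your proof is correct. Be aware, though, that the paper never proves this statement: it is quoted purely as background, cited from Laine and Yang \cite{laineyang}, so there is no internal proof to compare against. What you have reconstructed is, in essence, the original Laine--Yang argument: they first establish exactly your key estimate $T\left(r,f^{n}(z)f(z+c)\right)=(n+1)T(r,f)+S(r,f)$ (their Theorem 1), using the difference analogue of the lemma on the logarithmic derivative in both directions just as in your two factorizations $F=f^{n+1}\cdot\frac{f(z+c)}{f(z)}$ and $f^{n+1}=F\cdot\frac{f(z)}{f(z+c)}$, and then deduce the zero-distribution statement from the second main theorem for the values $0,a,\infty$ together with $\overline{N}(r,1/F)\leq 2T(r,f)+S(r,f)$. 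Your diagnosis of the crux is also accurate: the naive lower bound $T(r,F)\geq (n-1)T(r,f)+S(r,f)$ would only reach $n\geq 4$, and it is the sharp characteristic identity, resting on $m\left(r,f(z)/f(z+c)\right)=S(r,f)$ for finite order, that brings the threshold down to $n\geq 2$. For contrast, the proofs the paper actually contains (Theorems A and B, the paired $n=2$ analogues) are organized differently: there no sharp identity for $T(r,F)$ is proved; instead the authors write $f^{2}=(F+a)/L(g)$, bound $2T(r,f)\leq T(r,F+a)+T\left(r,1/L(g)\right)+O(1)$, and apply the second main theorem for three small functions to $F+a$, with the class hypotheses $N(r,h)+N(r,1/h)=S(r,h)$ playing the role that entirety and your bound on $\overline{N}(r,1/F)$ play in the single-function setting.
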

Later on, many authors have made improvements on the above theorem such as
$a$ is replaced by any non-zero polynomial, $f(z+c)$ and $a$ are replaced by  $f(z+c)-f(z)$ and any non-zero polynomial respectively, $f^{n}(z)$ and $a$ are replaced by polynomial of degree $n$ and non-zero function of growth $o(T(r,f))$ respectively, see \cite{liuyang, luolin}.\\
In 2011, Liu et al. \cite{liuliucao} also proved the above theorem for the case when $f(z)$ is a transcendental meromorphic function with $\rho_{2}(f)<1$ and $n\geq 6$.
They also gave counter examples for $n\leq 3$. After Liu et al., the same result was proved for $n\geq 4$ by Wang and Ye \cite{wangye}.\\
In $2014$, Liu et al.\cite{liuliuzhou} studied delay-differential version of Theorem \ref{haymanconjecture}. Then many improvements have been made by other authors, see \cite{wangye, lainelatreuch, liulaineyang}.

                     \section{\textbf{Auxiliary results}}

In this section we present some known basic facts and results which we will use in the next section. For a set $I\subset (0,\infty)$, the linear measure is defined by $m(I)=\int_{I}\,dt$ and for a set $J\subset (1,\infty)$, the logarithmic measure is defined by $m_{l}(J)=\int_{J}\frac{1}{t}\,dt$.

\begin{lemma}\rm\cite{lly}\label{llylem}
Suppose that $T:[0,\infty)\to [0,\infty)$ is a non-decreasing continuous function having $\rho_{2}(T)<1$ and $c$ is a non-zero real number. If $\delta\in(0,1-\rho_{2}(T))$, then
$$T(r+c)=T(r)+o\left(\frac{T(r)}{r^{\delta}}\right).$$
\end{lemma}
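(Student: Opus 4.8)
The statement to establish is a purely real-variable growth estimate for a non-decreasing continuous function $T$ of hyper-order $\rho_{2}(T)<1$; no complex analysis is needed once $T$ is given. The plan is to work throughout with $u:=\log T$, which is again non-decreasing and continuous, and to reduce everything to controlling the increment $u(r+c)-u(r)$. First I would convert the hyper-order hypothesis into a pointwise majorant. Since $\delta\in(0,1-\rho_{2}(T))$ we have $\rho_{2}(T)<1-\delta$, so I fix an exponent $\alpha$ with $\rho_{2}(T)<\alpha<1-\delta$; then $\alpha+\delta<1$ and, directly from the definition $\limsup_{r\to\infty}\frac{\log\log T(r)}{\log r}=\rho_{2}(T)<\alpha$, we get $\log\log T(r)\le\alpha\log r$, i.e.
$$u(r)=\log T(r)\le r^{\alpha}$$
for all $r$ larger than some $r_{0}$. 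I would treat $c>0$ first and recover $c<0$ at the end by applying the $c>0$ result at the shifted point $r-|c|$, using $T(r-|c|)\le T(r)$ and $(r-|c|)^{-\delta}\sim r^{-\delta}$.

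The heart of the argument is to show that $u(r+c)-u(r)=o(r^{-\delta})$ outside a set of finite logarithmic measure, which is the usual meaning carried by the $o(\cdot)$ in such lemmas. Writing $u(r+c)-u(r)=\int_{(r,r+c]}du(t)$ as a Stieltjes integral (this is what lets me avoid assuming $T$ differentiable), I would estimate the weighted average
$$\int_{r_{0}}^{\infty}\frac{u(r+c)-u(r)}{r^{1-\delta}}\,dr=\int_{r_{0}}^{\infty}r^{\delta-1}\!\!\int_{(r,r+c]}\!\!du(t)\,dr.$$
All integrands are non-negative, so Tonelli applies; switching the order of integration, for each fixed $t$ the variable $r$ ranges over an interval of length at most $c$ ending at $t$, whence the inner integral is $\le c\,t^{\delta-1}$ for $t$ large. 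This bounds the double integral by $c\int_{r_{0}}^{\infty}t^{\delta-1}\,du(t)$, and an integration by parts turns this into boundary terms plus $(1-\delta)\int_{r_{0}}^{\infty}u(t)\,t^{\delta-2}\,dt$. Inserting $u(t)\le t^{\alpha}$ makes the boundary term $t^{\delta-1}u(t)\le t^{\alpha+\delta-1}\to0$ and the remaining integrand $\le t^{\alpha+\delta-2}$ with $\alpha+\delta-2<-1$, so the whole expression converges. Consequently, for every fixed $\varepsilon>0$ the bad set $E_{\varepsilon}=\{r\ge r_{0}:u(r+c)-u(r)>\varepsilon r^{-\delta}\}$ satisfies $\int_{E_{\varepsilon}}\frac{dr}{r}\le\varepsilon^{-1}\int_{r_{0}}^{\infty}\frac{u(r+c)-u(r)}{r^{1-\delta}}\,dr<\infty$, since on $E_{\varepsilon}$ one has $r^{-1}<\varepsilon^{-1}r^{\delta-1}\bigl(u(r+c)-u(r)\bigr)$. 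This is exactly the assertion $u(r+c)-u(r)=o(r^{-\delta})$ off a set of finite logarithmic measure.

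Finally I would exponentiate back to $T$. Writing $\Delta(r):=u(r+c)-u(r)\ge0$, we have $\tfrac{T(r+c)}{T(r)}=e^{\Delta(r)}$, so
$$\frac{T(r+c)-T(r)}{T(r)}=e^{\Delta(r)}-1.$$
Because $\Delta(r)\to0$ (outside the exceptional set), $e^{\Delta(r)}-1=\Delta(r)\bigl(1+o(1)\bigr)=o(r^{-\delta})$, and multiplying through by $T(r)$ yields $T(r+c)=T(r)+o\!\left(T(r)/r^{\delta}\right)$, which together with the $c<0$ reduction completes the proof. The step I expect to be the main obstacle is the rigorous handling of the Stieltjes integral and the Tonelli/integration-by-parts manipulation when $T$ is merely continuous and non-decreasing (not absolutely continuous), and the careful bookkeeping showing that the resulting exceptional set has finite logarithmic measure uniformly enough to justify the $o(\cdot)$; the pointwise majorant $u\le r^{\alpha}$ and the exponentiation step are comparatively routine.
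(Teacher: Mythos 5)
Your argument is sound, but there is nothing in the paper to compare it against line by line: the paper does not prove this lemma at all, it simply quotes it from \cite{lly}, where (going back to Halburd--Korhonen--Tohge) it is proved by a quite different method, namely by contradiction and iteration: if for some $\varepsilon>0$ the set of $r$ with $T(r+c)\ge(1+\varepsilon r^{-\delta})T(r)$ had infinite logarithmic measure, one iterates this inequality along a sequence of points of that set spaced at least $c$ apart and concludes that $\log\log T(r_n)$ grows like $(1-\delta)\log r_n$, contradicting $\rho_{2}(T)<1-\delta$. Your route --- passing to $u=\log T$, proving $\int_{r_0}^{\infty}(u(r+c)-u(r))\,r^{\delta-1}dr<\infty$ via Tonelli and Stieltjes integration by parts against the majorant $u(r)\le r^{\alpha}$, $\alpha+\delta<1$, and then applying a Chebyshev estimate to each bad set $E_{\varepsilon}=\{r:u(r+c)-u(r)>\varepsilon r^{-\delta}\}$ --- is genuinely different: it is direct rather than by contradiction, and it trades the combinatorial iteration for a one-line integrability bound; the cited proof, in exchange, never needs to discuss measures or integration by parts. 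Three points to tighten. First, your reading of the $o(\cdot)$ as holding outside a set of finite logarithmic measure is the correct one and matches \cite{lly}; as literally printed in this paper (with no exceptional set) the lemma is false --- e.g.\ a continuous $T$ that is constant on long blocks and climbs from $\exp(r_n^{1/2})$ to $\exp(r_{n+1}^{1/2})$ on $[r_n,r_n+1]$, with $r_{n+1}=r_n^{3/2}$, has $\rho_{2}(T)=3/4<1$ yet $T(r_n+1)/T(r_n)\to\infty$. Second, the conclusion ``each $E_{1/n}$ has finite logarithmic measure'' still requires the standard diagonalization to produce a single exceptional set: choose $R_n\uparrow\infty$ with $\int_{E_{1/n}\cap[R_n,\infty)}t^{-1}dt<2^{-n}$ and take $E=\bigcup_n(E_{1/n}\cap[R_n,R_{n+1}])$; off $E$ one gets $u(r+c)-u(r)=o(r^{-\delta})$, and only then may you exponentiate. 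Third, a cosmetic slip: since $\delta-1<0$ the inner Tonelli integral is bounded by $c\,(t-c)^{\delta-1}\le 2c\,t^{\delta-1}$ for large $t$, not by $c\,t^{\delta-1}$; and one should remark that $u\le r^{\alpha}$ is legitimate because either $T>1$ eventually or $u$ is bounded above, both harmless. With these repairs your proof is a complete, self-contained substitute for the citation.
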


The following lemma gives estimate of counting function corresponding to the zeros of derivative of non-constant meromorphic function $f$.
\begin{lemma}\rm\label{yangyilem}\cite{gaoliu}
	Suppose that $f$ is a non-constant meromorphic function, then
	$$N\left(r,\frac{1}{f^{(k)}(z)}\right)\leq N\left(r,\frac{1}{f(z)}\right)+k\bar{N}(r,f(z))+S(r,f(z)), $$
	where $k\geq 1$.
\end{lemma}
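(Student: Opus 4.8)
The plan is to prove this bound by combining the Nevanlinna first main theorem with the lemma on the logarithmic derivative, routing the whole argument through the characteristic function $T(r,f^{(k)})$ rather than attempting a direct zero-by-zero count. The one tool I would rely on throughout is the standard proximity estimate $m(r,f^{(k)}/f)=S(r,f)$, valid for every non-constant meromorphic function.

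First I would control the characteristic of the derivative. Since the poles of $f^{(k)}$ lie exactly over the poles of $f$, and a pole of order $p$ becomes a pole of order $p+k$ after $k$ differentiations, one obtains the exact identity $N(r,f^{(k)})=N(r,f)+k\bar N(r,f)$. For the proximity part, writing $f^{(k)}=f\cdot(f^{(k)}/f)$ and using subadditivity of $m$ together with the logarithmic derivative estimate gives $m(r,f^{(k)})\le m(r,f)+S(r,f)$. Adding these yields
$$T(r,f^{(k)})\le T(r,f)+k\bar N(r,f)+S(r,f).$$

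Next I would apply the first main theorem to $f^{(k)}$ in the form $N(r,1/f^{(k)})=T(r,f^{(k)})-m(r,1/f^{(k)})+O(1)$ and substitute the bound above. At this point a term $-m(r,1/f^{(k)})$ remains and must be absorbed. The decisive step is the reverse proximity estimate $m(r,1/f)\le m(r,1/f^{(k)})+S(r,f)$, obtained by writing $1/f=(1/f^{(k)})(f^{(k)}/f)$ and invoking the logarithmic derivative lemma a second time. Feeding this in and using $T(r,f)=N(r,1/f)+m(r,1/f)+O(1)$, the proximity contributions cancel and leave exactly
$$N\left(r,\frac{1}{f^{(k)}}\right)\le N\left(r,\frac{1}{f}\right)+k\bar N(r,f)+S(r,f).$$

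I expect the main obstacle to be resisting the tempting but flawed direct approach, namely splitting the zeros of $f^{(k)}$ into those lying over zeros of $f$ and the remaining ones, and then estimating the latter by counting the poles of $f^{(k)}/f$. That route double counts the contribution of the zeros of $f$: a zero of $f$ of order at most $k$ can still force a zero of $f^{(k)}$ of arbitrarily high order, so the zero/pole bookkeeping of $f^{(k)}/f$ does not telescope, and it produces a spurious factor of $2$ in front of $N(r,1/f)$. Channelling everything through $T(r,f^{(k)})$ and the two applications of the logarithmic derivative lemma is precisely what makes the proximity terms cancel and yields the sharp coefficients.
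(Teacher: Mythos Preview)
Your proof is correct. The paper itself does not supply a proof of this lemma; it is quoted as a known result with a citation to \cite{gaoliu}, so there is no argument in the paper to compare against. The route you take---bounding $T(r,f^{(k)})$ via $N(r,f^{(k)})=N(r,f)+k\bar N(r,f)$ and the logarithmic derivative lemma, then passing through the first main theorem and using $m(r,1/f)\le m(r,1/f^{(k)})+S(r,f)$ to cancel the proximity terms---is the standard textbook proof of this inequality (see, e.g., Yang--Yi or Laine), and every step is justified. Your closing remark about the direct zero-counting approach is also on point: a zero of $f$ of order $m\le k$ can sit under a zero of $f^{(k)}$ of unrelated, possibly large, multiplicity, so a naive split of $N(r,1/f^{(k)})$ into ``over zeros of $f$'' and ``elsewhere'' does not close without the characteristic-function detour.
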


The following two lemmas can be easily obtained by applying basic facts of Nevanlinna theory.
\begin{lemma}\rm\label{implem1}
	If $f$ is a function from $M^*$ class, then 
	$N(r,f^{(k)})=S(r,f)$ and $N\left(r,\frac{1}{f^{(k)}}\right)=S(r,f)$.
\end{lemma}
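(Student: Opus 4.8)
The plan is to reduce both assertions to the defining property of the class $M^{*}$, namely that for $f\in M^{*}$ one has $N(r,f)=S(r,f)$ and $N(r,1/f)=S(r,f)$, and then to invoke only elementary facts about how differentiation moves poles, together with Lemma \ref{yangyilem}.

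For the first identity $N(r,f^{(k)})=S(r,f)$, I would start from the observation that differentiation creates no new poles: the only poles of $f^{(k)}$ are poles of $f$, and a pole of $f$ of multiplicity $m$ is a pole of $f^{(k)}$ of multiplicity exactly $m+k$. Counting with multiplicity, this yields
$$N(r,f^{(k)})=N(r,f)+k\,\bar N(r,f).$$
Since $\bar N(r,f)\le N(r,f)$ and, by the definition of $M^{*}$, $N(r,f)=S(r,f)$, both summands on the right are $S(r,f)$, and the claim follows.

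For the second identity $N(r,1/f^{(k)})=S(r,f)$, I would apply Lemma \ref{yangyilem} directly, which gives
$$N\!\left(r,\frac{1}{f^{(k)}}\right)\le N\!\left(r,\frac{1}{f}\right)+k\,\bar N(r,f)+S(r,f).$$
The term $N(r,1/f)$ is $S(r,f)$ by the definition of $M^{*}$, and $\bar N(r,f)\le N(r,f)=S(r,f)$ as above, so every term on the right-hand side is $S(r,f)$; hence $N(r,1/f^{(k)})=S(r,f)$.

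I do not expect any genuine obstacle here, since both estimates are immediate consequences of the hypothesis $N(r,f)+N(r,1/f)=S(r,f)$ once Lemma \ref{yangyilem} is in hand. The only point requiring a little care is the bookkeeping of error terms: all quantities $S(r,\cdot)$ appearing above should be understood relative to $f$, which is exactly the form in which Lemma \ref{yangyilem} supplies its remainder, so no comparison between $S(r,f)$ and $S(r,f^{(k)})$ is needed.
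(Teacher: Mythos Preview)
Your proposal is correct and follows essentially the same route as the paper: bounding $N(r,f^{(k)})$ via the elementary observation that differentiation only raises pole orders (the paper uses the cruder estimate $N(r,f^{(k)})\le (k+1)N(r,f)$ in place of your exact identity $N(r,f^{(k)})=N(r,f)+k\bar N(r,f)$), and then invoking Lemma~\ref{yangyilem} together with the $M^{*}$ hypothesis for the bound on $N(r,1/f^{(k)})$.
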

\begin{proof}
Given that $f$ is a transcendental meromorphic function satisfying
	\begin{equation}\label{giveneq}
		N(r,f)+N(r,1/f)=S(r,f).
	\end{equation}
	We know that if $f$ has a pole of order $m$ at $z_{0}$, then $f^{(k)}$ has a pole of order $m+k\leq (k+1)m$ at $z_{0}$. Thus
	\begin{equation*}
		N(r,f^{(k)})\leq (k+1)N(r,f).
	\end{equation*}
	Using equation \eqref{giveneq}, we have $N(r,f^{(k)})=S(r,f)$.\\
	Next, using equation \eqref{giveneq} to Lemma \rm\ref{yangyilem}, we have $N\left(r,\frac{1}{f^{(k)}}\right)=S(r,f)$.   
\end{proof}

\begin{lemma}\rm\label{llyimplem}
If $f$ is a function from $M^{**}$ class, $c$ is a non-zero complex number and $k\geq 1$, then
\begin{enumerate}
\item $N(r,f(z+c))=S(r,f)$ and  $N\left(r,\frac{1}{f(z+c)}\right)=S(r,f).$
\item $N(r,f(z+c)-f(z))=S(r,f)$ and $N\left(r,\frac{1}{f(z+c)-f(z)}\right)=S(r,f).$
\item $N(r,f^{(k)}(z+c))=S(r,f)$ and $N\left(r,\frac{1}{f^{(k)}(z+c)}\right)=S(r,f).$
\end{enumerate} 
\end{lemma}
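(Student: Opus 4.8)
The plan is to prove the three items in turn, in each case reducing the claim about a shifted, difference, or delay‑differential expression to the two hypotheses $N(r,f)=S(r,f)$ and $N(r,1/f)=S(r,f)$ together with the only shift tools available in the $\rho_2(f)<1$ regime: Lemma~\ref{llylem} and the difference analogue of the lemma on the logarithmic derivative. For item (1) I would first record the elementary comparison of counting functions under a translation. The poles (resp.\ zeros) of $f(z+c)$ are exactly the $(-c)$‑translates of the poles (resp.\ zeros) of $f$, so $n(t,f(z+c))\le n(t+|c|,f)$, and after integrating this gives $N(r,f(z+c))\le N(r+|c|,f)+S(r,f)$ and likewise for $1/f$; the correction is $S(r,f)$ since $f$ is transcendental. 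Because $\rho_2(f)<1$, Lemma~\ref{llylem} applied to $T(r,f)$ yields $T(r+|c|,f)=(1+o(1))T(r,f)$, so $N(r+|c|,f)=o(T(r,f))=S(r,f)$, and similarly $N(r+|c|,1/f)=S(r,f)$. This settles both equalities of (1).

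For item (2) the pole count is immediate, since the poles of $f(z+c)-f(z)$ lie among those of $f(z+c)$ and of $f$, whence $N(r,f(z+c)-f(z))\le N(r,f(z+c))+N(r,f)=S(r,f)$ by (1) and the hypothesis. The zero count is the crux of the whole lemma. Here I would introduce $\phi(z)=f(z+c)/f(z)$ and show it is a small function of $f$: its proximity term is $S(r,f)$ by the logarithmic difference lemma, while its poles sit among the poles of $f(z+c)$ and the zeros of $f$, both $S(r,f)$ by (1) and the hypothesis, so $T(r,\phi)=S(r,f)$. Writing $f(z+c)-f(z)=f(z)\bigl(\phi(z)-1\bigr)$ and excluding the degenerate $c$‑periodic case $\phi\equiv1$, every zero of the difference is a zero of $f$ or of $\phi-1$, so
$$N\!\left(r,\frac{1}{f(z+c)-f(z)}\right)\le N\!\left(r,\frac1f\right)+N\!\left(r,\frac{1}{\phi-1}\right)\le S(r,f)+T(r,\phi)+O(1)=S(r,f).$$

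For item (3) I would exploit that differentiation commutes with translation, $f^{(k)}(z+c)=\bigl(f(z+c)\bigr)^{(k)}$, and set $g(z)=f(z+c)$. By (1) we have $N(r,g)=S(r,f)$ and $N(r,1/g)=S(r,f)$, while $S(r,g)=S(r,f)$ because $T(r,g)=T(r,f)+S(r,f)$ in the hyper‑order‑less‑than‑one setting. A pole of $g$ of order $m$ produces a pole of $g^{(k)}$ of order $m+k\le(k+1)m$, so $N(r,g^{(k)})\le(k+1)N(r,g)=S(r,f)$, and Lemma~\ref{yangyilem} gives $N(r,1/g^{(k)})\le N(r,1/g)+k\bar N(r,g)+S(r,g)=S(r,f)$. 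This is precisely the argument of Lemma~\ref{implem1} transplanted from $f$ to $g=f(z+c)$, and it yields both equalities of (3).

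The step I expect to be the main obstacle is the zero count in item (2). Unlike poles, the zeros of $f(z+c)-f(z)$ are not translates of any data attached to $f$, and the First Main Theorem alone only gives $N(r,1/(f(z+c)-f(z)))\le T(r,f(z+c)-f(z))=O(T(r,f))$, which is far too weak. The reduction to the small function $\phi=f(z+c)/f(z)$ is the device that converts the problem back into the pole estimate of (1), and it is the one place where the difference analogue of the logarithmic derivative lemma is genuinely needed rather than merely convenient.
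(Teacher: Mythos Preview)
Your proposal is correct and follows essentially the same route as the paper. In each item you use the same key devices the paper uses: the translation comparison $N(r,f(z+c))\le N(r+|c|,f)$ together with Lemma~\ref{llylem} for item~(1); the factorisation $f(z+c)-f(z)=f(z)\bigl(\phi(z)-1\bigr)$ with $\phi=f(z+c)/f(z)$ and the logarithmic difference lemma to show $T(r,\phi)=S(r,f)$ for item~(2); and the transplant of Lemma~\ref{implem1}/Lemma~\ref{yangyilem} to $g(z)=f(z+c)$ for item~(3). The only cosmetic differences are that the paper applies Lemma~\ref{llylem} directly to $N(r,f)$ rather than to $T(r,f)$, and does not single out the periodic case $\phi\equiv1$ (in which $f(z+c)-f(z)\equiv0$ and the statement is vacuous).
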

\begin{proof}
\begin{enumerate}
\item By the simple observation and using Lemma \ref{llylem}, we have\\

$N(r,f(z+c))\leq N(r+|c|,f)\leq N(r,f)+S(r,f)=S(r,f)$ and
$$N\left(r,\frac{1}{f(z+c)}\right)\leq N\left(r+|c|,\frac{1}{f}\right)\leq N\left(r,\frac{1}{f}\right)+S(r,f)=S(r,f).$$
\item $N(r,f(z+c)-f(z))=S(r,f)$ is obvious by $(1)$ part and 
\begin{align*}
N\left(r,\frac{1}{f(z+c)-f(z)}\right)&=N\left(r,\frac{1}{f(z)}.\frac{1}{\frac{f(z+c)-f(z)}{f(z)}}\right)\\
&\leq N\left(r,\frac{1}{f(z)}\right)+N\left(r,\frac{1}{\frac{f(z+c)}{f(z)}-1}\right)\\
&\leq T\left(r,\frac{f(z+c)}{f(z)}\right)+S(r,f)\\
&=m\left(r,\frac{f(z+c)}{f(z)}\right)+N\left(r,\frac{f(z+c)}{f(z)}\right)+S(r,f)\\
&=S(r,f).
\end{align*} 
\item As we know that $N(r,f^{(k)}(z+c))\leq (k+1)N(r,f(z+c))$,
 then applying $(1)$ part, we get $N(r,f^{(k)}(z+c))=S(r,f)$.
Also applying $(1)$ part into Lemma \ref{yangyilem}, we get $N\left(r,\frac{1}{f^{(k)}(z+c)}\right)=S(r,f).$
\end{enumerate}
\end{proof}

\begin{lemma}\rm\label{llylemma}\cite{lly}
Suppose that $f$ is a transcendental meromorphic function such that $\rho_{2}(f)<1$ and $k\geq 0$, then
$$m\left(r,\frac{f^{(k)}(z+c)}{f(z)}\right)=S(r,f),$$
outside of a possible exceptional set of finite logarithmic measure.
\end{lemma}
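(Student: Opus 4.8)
The plan is to split the quotient into a piece handled by the classical lemma on the logarithmic derivative and a piece handled by its difference analogue. Setting $g(z)=f(z+c)$, observe that $f^{(k)}(z+c)=g^{(k)}(z)$ and $f(z+c)=g(z)$, so that
\[
\frac{f^{(k)}(z+c)}{f(z)}=\frac{g^{(k)}(z)}{g(z)}\cdot\frac{f(z+c)}{f(z)}.
\]
By the sub-additivity of the proximity function this gives
\[
m\left(r,\frac{f^{(k)}(z+c)}{f(z)}\right)\le m\left(r,\frac{g^{(k)}(z)}{g(z)}\right)+m\left(r,\frac{f(z+c)}{f(z)}\right),
\]
and it suffices to show that each term on the right is $S(r,f)$ outside an exceptional set of finite logarithmic measure.

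For the first term I would apply the classical lemma on the logarithmic derivative, in its iterated form, to the meromorphic function $g$, which yields $m(r,g^{(k)}/g)=S(r,g)$. To convert this into $S(r,f)$ I would use the near shift-invariance of the Nevanlinna characteristic, namely $T(r,f(z+c))=T(r,f)+S(r,f)$ under the hypothesis $\rho_{2}(f)<1$; this is itself a consequence of Lemma \ref{llylem} applied to the non-decreasing continuous function $T(r,f)$, which gives $T(r\pm|c|,f)=T(r,f)+o(T(r,f)/r^{\delta})$. Since the two error classes then coincide, $S(r,g)=S(r,f)$ and hence $m(r,g^{(k)}/g)=S(r,f)$.

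For the second term the key input is the difference analogue of the logarithmic derivative lemma: for a meromorphic function $f$ with $\rho_{2}(f)<1$,
\[
m\left(r,\frac{f(z+c)}{f(z)}\right)=S(r,f)
\]
holds outside a set of finite logarithmic measure. This is precisely where the hypothesis $\rho_{2}(f)<1$ and the exceptional set in the conclusion enter, and the case $k=0$ of the lemma is nothing but this estimate (the factor $g^{(0)}/g$ being identically $1$). Adding the two bounds yields the assertion, with exceptional set the one produced by the difference lemma. The main obstacle is the bookkeeping of the error term under the shift $z\mapsto z+c$: one must verify that smallness relative to $T(r,f(z+c))$ coincides with smallness relative to $T(r,f)$, and it is exactly the hyper-order condition $\rho_{2}(f)<1$, through Lemma \ref{llylem}, that secures this equivalence.
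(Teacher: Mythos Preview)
The paper does not supply a proof of this lemma at all; it simply quotes the statement from \cite{lly} and uses it as a black box. There is therefore no ``paper's own proof'' to compare against.

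That said, your argument is the standard derivation and is correct. The decomposition
\[
\frac{f^{(k)}(z+c)}{f(z)}=\frac{f^{(k)}(z+c)}{f(z+c)}\cdot\frac{f(z+c)}{f(z)}
\]
reduces the claim to the classical logarithmic derivative lemma applied to $g(z)=f(z+c)$ together with the difference logarithmic derivative lemma (the $k=0$ case). The only point worth being explicit about is the passage from $S(r,g)$ to $S(r,f)$: you correctly invoke Lemma~\ref{llylem} to get $T(r,f(z+c))=T(r,f)+S(r,f)$, but strictly speaking one should also note that $\rho_{2}(f(z+c))=\rho_{2}(f)<1$ so that the lemma applies in both directions and the two $S$-classes genuinely coincide. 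With that caveat the proof is complete, and it is essentially the argument found in \cite{lly}.
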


The following two lemmas are obtained by using some basic facts of the Nevanlinna theory.
\begin{lemma}\label{implem2}
If $f$ is a function from $M^*$ class, then $T\left(r,\frac{1}{f^{(k)}}\right)\leq T(r,f)+S(r,f)$.
\end{lemma}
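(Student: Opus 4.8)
The plan is to reduce everything to the characteristic of $f^{(k)}$ itself via the First Main Theorem, then control the proximity part with the lemma on the logarithmic derivative and the counting part with Lemma \ref{implem1}. Since $f\in M^*$ means precisely that $N(r,f)+N(r,1/f)=S(r,f)$, the two counting terms we will encounter are already negligible, so the whole estimate should collapse to a comparison of proximity functions.

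First I would invoke the First Main Theorem to write $T\left(r,\frac{1}{f^{(k)}}\right)=T(r,f^{(k)})+O(1)$, so it suffices to bound $T(r,f^{(k)})$. Next I would split this as $T(r,f^{(k)})=m(r,f^{(k)})+N(r,f^{(k)})$. For the counting term, Lemma \ref{implem1} gives immediately $N(r,f^{(k)})=S(r,f)$, so this contributes nothing beyond the error. For the proximity term I would factor $f^{(k)}=\frac{f^{(k)}}{f}\cdot f$ and use subadditivity of $m$ together with the logarithmic derivative estimate $m\left(r,\frac{f^{(k)}}{f}\right)=S(r,f)$, obtaining $m(r,f^{(k)})\le m(r,f)+S(r,f)$.

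Finally I would assemble the pieces: combining the above yields
$$T\left(r,\frac{1}{f^{(k)}}\right)\le m(r,f)+S(r,f)\le T(r,f)+S(r,f),$$
which is exactly the claimed bound. The last inequality uses only $m(r,f)\le T(r,f)$, so one does not even need the hypothesis $N(r,f)=S(r,f)$ for this particular chain, though it is of course part of the $M^*$ assumption.

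I do not expect a genuine obstacle here, since each step is a standard tool of Nevanlinna theory. The only point requiring a little care is the bookkeeping of the exceptional set and the $S(r,f)$ terms: the logarithmic derivative lemma holds outside a set of finite linear (or logarithmic) measure, and one must make sure that absorbing $N(r,f^{(k)})$ and $m(r,f^{(k)}/f)$ into a single $S(r,f)$ is legitimate, which it is because a finite union of such exceptional sets is again of finite measure. So the ``hard part'' is really just verifying that all the error terms are simultaneously $S(r,f)$, after which the inequality follows mechanically.
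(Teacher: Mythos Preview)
Your proof is correct and uses the same three ingredients as the paper: the First Main Theorem, the logarithmic derivative estimate $m(r,f^{(k)}/f)=S(r,f)$, and Lemma~\ref{implem1} for $N(r,f^{(k)})=S(r,f)$. The organization differs slightly: you apply the First Main Theorem directly to $1/f^{(k)}$ and then split $T(r,f^{(k)})=m(r,f^{(k)})+N(r,f^{(k)})$, bounding the proximity part via $m(r,f^{(k)})\le m(r,f^{(k)}/f)+m(r,f)$; the paper instead factors $1/f^{(k)}=(f/f^{(k)})\cdot(1/f)$ at the level of characteristic functions, which forces it to also estimate $N(r,f^{(k)}/f)\le N(r,f^{(k)})+N(r,1/f)$ and hence to invoke the additional hypothesis $N(r,1/f)=S(r,f)$. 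Your route is therefore marginally more economical, needing only the pole part of the $M^*$ condition (through Lemma~\ref{implem1}) rather than both halves.
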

\begin{proof}
Given that $f$ is a transcendental meromorphic function satisfying
\begin{equation}\label{giveneqq}
	N(r,f)+N(r,1/f)=S(r,f).
\end{equation}
Next
\begin{align*}
T\left(r,\frac{1}{f^{(k)}}\right)&=T\left(r,\frac{1}{f^{(k)}/f}.\frac{1}{f}\right) \\
&\leq T\left(r,\frac{1}{f^{(k)}/f}\right)+T\left( r,\frac{1}{f}\right)+O(1).
\end{align*}
Using first main theorem of Nevanlinna, we have
\begin{align*}
T\left(r,\frac{1}{f^{(k)}}\right) &\leq T\left( r,\frac{f^{(k)}}{f}\right) +T(r,f)+O(1)\\
&\leq m\left( r,\frac{f^{(k)}}{f}\right) +N\left( r,\frac{f^{(k)}}{f}\right) +T(r,f)+O(1)\\
&\leq S(r,f)+N(r,f^{(k)})+N\left( r,\frac{1}{f}\right) +T(r,f).
\end{align*}
Applying Lemma \ref{implem1}, we have
$$T\left( r,\frac{1}{f^{(k)}}\right) \leq T(r,f)+S(r,f).$$
\end{proof}

\begin{lemma}\rm\label{implem3}
If $f$ is a function from $M^{**}$ class, then
\begin{enumerate}
\item $T\left(r,\frac{1}{f(z+c)}\right)\leq T(r,f)+S(r,f).$
\item $T\left(r,\frac{1}{f(z+c)-f(z)}\right)\leq T(r,f)+S(r,f).$
\item  $T\left(r,\frac{1}{f^{(k)}(z+c)}\right)\leq T(r,f)+S(r,f).$
\end{enumerate}
\end{lemma}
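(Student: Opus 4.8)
The plan is to mimic the proof of Lemma \ref{implem2} in each of the three cases, replacing the logarithmic-derivative factor $f^{(k)}/f$ by the appropriate shift quotient $L(f)/f$. In every case the starting point is to factor $1/L(f)$ as the product of $1/(L(f)/f)$ and $1/f$, so that subadditivity of the characteristic function gives
$$T\left(r,\frac{1}{L(f)}\right)\leq T\left(r,\frac{f}{L(f)}\right)+T\left(r,\frac{1}{f}\right)+O(1).$$
Applying the first main theorem of Nevanlinna to each term on the right converts this into
$$T\left(r,\frac{1}{L(f)}\right)\leq T\left(r,\frac{L(f)}{f}\right)+T(r,f)+O(1),$$
and then the whole task reduces to showing $T(r,L(f)/f)=S(r,f)$, i.e. that both the proximity and the counting contributions of $L(f)/f$ are negligible.

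For the proximity contribution I would invoke Lemma \ref{llylemma}: with $k=0$ it gives $m(r,f(z+c)/f(z))=S(r,f)$ for cases (1) and (2), and with general $k\geq 1$ it gives $m(r,f^{(k)}(z+c)/f(z))=S(r,f)$ for case (3); note that $f\in M^{**}$ has $\rho_{2}(f)<1$, which is exactly the hypothesis required by that lemma. For the counting contribution I would bound the poles of $L(f)/f$ by $N(r,L(f))+N(r,1/f)$. Here $N(r,1/f)=S(r,f)$ by the defining property of $M^{**}$, while $N(r,L(f))=S(r,f)$ is supplied term-by-term by Lemma \ref{llyimplem}: part (1) for the shift $f(z+c)$, part (2) for the difference $f(z+c)-f(z)$, and part (3) for the delay-derivative $f^{(k)}(z+c)$. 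Combining, $T(r,L(f)/f)=m(r,L(f)/f)+N(r,L(f)/f)=S(r,f)$, which yields the claimed bound $T(r,1/L(f))\leq T(r,f)+S(r,f)$ in each case.

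The only case needing slight extra care is (2), the difference operator, where $L(f)/f=f(z+c)/f(z)-1$. The subtracted constant is harmless: it alters $m$ and $N$ by at most $O(1)$, so $m(r,f(z+c)/f(z)-1)\leq m(r,f(z+c)/f(z))+O(1)=S(r,f)$, and the poles of $f(z+c)/f(z)-1$ coincide with those of $f(z+c)/f(z)$, which are again controlled by $N(r,f(z+c))+N(r,1/f)=S(r,f)$. Thus the estimate goes through unchanged. Overall I expect no genuine obstacle here — the argument is essentially a transcription of Lemma \ref{implem2} — and the main thing to watch is matching each operator to the correct part of Lemma \ref{llyimplem} and confirming that the hyper-order restriction $\rho_{2}(f)<1$ built into the $M^{**}$ class is precisely what licenses the use of Lemma \ref{llylemma}.
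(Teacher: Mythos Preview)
Your proposal is correct and follows essentially the same route as the paper: factor $1/L(f)$ through $L(f)/f$, apply the first main theorem, and then bound $T(r,L(f)/f)=m(r,L(f)/f)+N(r,L(f)/f)$ using Lemma~\ref{llylemma} for the proximity part and Lemma~\ref{llyimplem} together with the $M^{**}$ hypothesis for the counting part. The paper's written proof of part~(2) is exactly this computation, and parts~(1) and~(3) are handled there by the same template you describe.
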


\begin{proof}
\begin{enumerate}
\item This is easily obtained by applying first fundamental theorem of Nevanlinna, Lemma \ref{llylemma} and $(1)$ part of Lemma \ref{llyimplem}.
\item Applying first fundamental theorem of Nevanlinna, we have
 \begin{align*}
T\left(r,\frac{1}{f(z+c)-f(z)}\right)&\leq T(r,f(z+c)-f(z))+O(1)\\
&\leq T\left(r,\frac{f(z+c)-f(z)}{f(z)}\right)+T(r,f(z))+O(1)\\
&\leq T\left(r,\frac{f(z+c)}{f(z)}\right)+T(r,f(z))+O(1)\\
&=m\left(r,\frac{f(z+c)}{f(z)}\right)+N\left(r,\frac{f(z+c)}{f(z)}\right)+T(r,f(z))+O(1).
\end{align*}
Now applying Lemma \ref{llylemma} and $(1)$ part of Lemma \ref{implem1}, we have
$$T\left(r,\frac{1}{f(z+c)-f(z)}\right)\leq T(r,f(z))+S(r,f).$$

\item Similarly as we did for previous part, we can prove this one also.
\end{enumerate} 
\end{proof}

Next lemma gives estimate of the characteristic function of an exponential polynomial $f$ and this can also be seen in \cite{whl}.
\begin{lemma}\rm\label{whllem}
Suppose $f$ is an entire function given by
$$f(z)=A_{0}(z)+A_{1}(z)e^{w_{1}z^{s}}+A_{2}(z)e^{w_{2}z^{s}}+...+A_{m}(z)e^{w_{m}z^{s}},$$
where $A_{i}(z);0\leq i\leq m$ denote either exponential polynomial of degree $<s$ or polynomial in $z$, $w_{i};1\leq i\leq m$ denote the constants and $s$ denotes a natural number. Then
$$T(r,f)=C(Co(W_{0}))\frac{r^{s}}{2\pi}+o(r^{s}),$$
Here $C(Co(W_{0}))$ is the perimeter of the convex hull of the set $W_{0}=\{0,\overline{w}_{1},\overline{w}_{2},...,\overline{w}_{m}\}$.
Moreover, 
\begin{enumerate}
\item if $A_{0}(z)\not\equiv 0$, then
$$m(r,\frac{1}{f})=o(r^{s}).$$ 
\item if $A_{0}(z)\equiv 0$, then
$$N(r,\frac{1}{f})=C(Co(W))\frac{r^{s}}{2\pi}+o(r^{s}),$$
where $W=\{\overline{w}_{1},\overline{w}_{2},...,\overline{w}_{m}\}.$
\end{enumerate}

\end{lemma}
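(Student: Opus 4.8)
The plan is to reduce everything to an estimate of $\log|f(re^{i\te})|$ as a function of the direction $\te$ and then integrate. Since $f$ is entire we have $N(r,f)=0$, so $T(r,f)=m(r,f)=\frac{1}{2\pi}\int_0^{2\pi}\log^+|f(re^{i\te})|\,d\te$, and the whole problem becomes finding the size of $|f|$ along rays. Writing $z=re^{i\te}$, each term satisfies $|A_j(z)e^{w_jz^s}|=|A_j(z)|\,e^{r^s\RE(w_je^{is\te})}$; because every coefficient is a polynomial or an exponential polynomial of degree $<s$, the maximum-modulus estimate bounds $\log|A_j(re^{i\te})|$ above by $o(r^s)$ uniformly in $\te$. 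Hence the growth in each direction is governed by the indicator
$$h(\te)=\max_{j}\,\RE(w_je^{is\te}),$$
the maximum being over the active exponents, with the convention that the index $0$ (exponent $0$) is present precisely when $A_0\not\equiv0$.

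The geometric translation is the key. Writing $w_j=a_j+ib_j$ one has $\RE(w_je^{is\te})=a_j\cos s\te-b_j\sin s\te=\langle\overline{w}_j,(\cos s\te,\sin s\te)\rangle$, so $h(\te)$ is the value at the direction $s\te$ of the support function of the finite point set $\{\overline{w}_j\}$, and hence of its convex hull. I would then establish the basic asymptotic: away from the finitely many critical directions at which the maximum defining $h(\te)$ is attained by two or more indices, a single exponential strictly dominates, so
$$\log|f(re^{i\te})|=r^sh(\te)+o(r^s)$$
uniformly on compact subsets of the complement of the critical directions. Splitting $\log=\log^+-\log^-$ and integrating, with the contributions of small neighbourhoods of the critical directions shown to be $o(r^s)$, yields
$$m(r,f)=\frac{r^s}{2\pi}\int_0^{2\pi}h^+(\te)\,d\te+o(r^s),\qquad m\!\left(r,\tfrac1f\right)=\frac{r^s}{2\pi}\int_0^{2\pi}h^-(\te)\,d\te+o(r^s).$$
The crucial observation is that $\log^+$ truncates negative values to $0$, and taking $\max(0,\cdot)$ of a support function is exactly the support function of the set enlarged by the origin; thus $\int_0^{2\pi}h^+(\te)\,d\te$ always equals the support-function integral of $Co(W_0)$, independently of whether $A_0$ vanishes. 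Applying Cauchy's formula, which expresses the perimeter of a convex body $K$ as $C(K)=\int_0^{2\pi}H_K(\varphi)\,d\varphi$ in terms of its support function $H_K$, together with the substitution $\varphi=s\te$, gives $T(r,f)=m(r,f)=C(Co(W_0))\frac{r^s}{2\pi}+o(r^s)$.

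The two refinements now follow from this bookkeeping. If $A_0\not\equiv0$ then the origin is an active exponent, so $h(\te)=\max(0,\max_{j\ge1}\RE(w_je^{is\te}))\ge0$ for all $\te$; hence $h^-\equiv0$ and the displayed formula gives $m(r,1/f)=o(r^s)$, which is part $(1)$. If $A_0\equiv0$ I would instead invoke the first main theorem, $T(r,1/f)=T(r,f)+O(1)$, so that $N(r,1/f)=T(r,f)-m(r,1/f)+O(1)$. Here $\int h^+$ corresponds to $C(Co(W_0))$ while $\int h=\int h^+-\int h^-$ corresponds to $C(Co(W))$ by Cauchy's formula applied to $W$; subtracting cancels the origin's contribution and leaves $N(r,1/f)=C(Co(W))\frac{r^s}{2\pi}+o(r^s)$, which is part $(2)$.

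The main obstacle is the behaviour near the critical directions. There two or more exponentials are comparable and genuine cancellation — and therefore the zeros of $f$ — occurs, so the one-term asymptotic breaks down; one must both bound how small $|f|$ can become, in order to control the $\log^-$ integral, and show that the measure of the offending neighbourhoods shrinks with $r$ fast enough that their total contribution is $o(r^s)$. A secondary technicality is that the coefficients $A_j$ may themselves be exponential polynomials of lower degree, so one must check that their oscillation and their own zeros contribute only $o(r^s)$; this is handled by the uniform upper bound $\log|A_j|=o(r^s)$ for the dominant-term analysis and by an induction on $s$ when the finer zero-counting of the coefficients is needed.
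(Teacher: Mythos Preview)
The paper does not prove this lemma; it is quoted verbatim from Wen--Heittokangas--Laine \cite{whl} (where it in turn rests on Steinmetz's work on exponential polynomials), so there is no in-paper argument to compare against. Your sketch is, in outline, exactly the classical Steinmetz approach used in that reference: pass to the Phragm\'en--Lindel\"of indicator $h(\te)=\max_j\RE(w_je^{is\te})$, recognise it as the support function of the conjugate exponent set, and recover the perimeter via Cauchy's formula. The deductions of parts (1) and (2) from the $m(r,f)$ and $m(r,1/f)$ asymptotics are correct.

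Two remarks on the points you flag as obstacles. First, the control near critical directions is genuinely the delicate step, but it is not as bad as it may look: one does not need a lower bound on $|f|$ there, only that the \emph{set} of $\te$ on which the dominant-term asymptotic fails has measure $o(1)$, together with the crude global upper bound $\log^+|f(re^{i\te})|\le r^s\max_j|w_j|+o(r^s)$ to cap the contribution on that small set. The lower bound for $\log|f|$ (needed for $m(r,1/f)$) is handled by a Cartan--Boutroux type estimate, exactly as in Steinmetz. Second, your ``induction on $s$'' for the coefficients is the right idea, but note that for the $T(r,f)$ and $m(r,1/f)$ statements one only needs the uniform upper bound $\log|A_j(re^{i\te})|=o(r^s)$, which is immediate; the finer structure of the $A_j$ is not used.
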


                 	\section{\textbf{Proof of theorems}}

\begin{proof}[\underline{Proof of Theorem A}]
Let $F(z)=f^{2}(z)g^{(k)}(z)-a(z)$, then
\begin{align*}
2T(r,f(z))&=T\left(r,\frac{F(z)+a(z)}{g^{(k)}(z)}\right)\\
&\leq T(r,F(z)+a(z))+T\left(r,\frac{1}{g^{(k)}(z)}\right)+O(1)
\end{align*}
In a simple way, we have
\begin{equation}\label{eq}
2T(r,f)\leq T(r,F+a)+T\left(r,\frac{1}{g^{(k)}}\right)+O(1)
\end{equation}
Using second fundamental theorem of Nevanlinna for three small functions, we have
\begin{align*}
T(r,F+a)&\leq N(r,F+a)+N\left(r,\frac{1}{F+a}\right)+N\left(r,\frac{1}{F}\right)+S(r,F)\\
&\leq N(r,f^{2}g^{(k)})+N\left(r,\frac{1}{f^{2}g^{(k)}}\right)+N\left(r,\frac{1}{F}\right)+S(r,F)\\
&\leq 2\left(N(r,f)+N(r,\frac{1}{f})\right)+(N(r,g^{(k)})+N\left(r,\frac{1}{g^{(k)}}\right)+N\left(r,\frac{1}{F}\right)+S(r,F)
\end{align*}
Applying Lemma \ref{implem1}, we have
\begin{equation}\label{eq1}
T(r,F+a)\leq N\left(r,\frac{1}{F}\right)+S(r,f)+S(r,g).
\end{equation}
Next using Lemma \ref{implem2} on $g(z)$, we have
\begin{equation}\label{eq2}
T\left(r,\frac{1}{g^{(k)}}\right)\leq T(r,g)+S(r,g).
\end{equation}
From equations \eqref{eq}, \eqref{eq1} and \eqref{eq2}, we have
\begin{equation}\label{imppeq1}
2T(r,f)-T(r,g)\leq N\left(r,\frac{1}{F}\right)+S(r,f)+S(r,g) 
\end{equation}
Similarly, let $G(z)=g^{2}(z)f^{(k)}(z)-a(z)$ then proceeding to same technique as we have done above, we get
\begin{equation}\label{imppeq2}
2T(r,g)-T(r,f)\leq N\left(r,\frac{1}{G}\right)+S(r,f)+S(r,g)
\end{equation}
From equation \eqref{imppeq1} and \eqref{imppeq2}, we have
$$T(r,f)+T(r,g)\leq N\left(r,\frac{1}{F}\right)+N\left(r,\frac{1}{G}\right)+S(r,f)+S(r,g).$$
This implies that at least one of $F(z)=f^{2}(z)g^{(k)}(z)-a(z)$ and $G(z)=g^{2}(z)f^{(k)}(z)-a(z)$ have infinitely many zeros.
\end{proof}	

\begin{proof}[\underline{Proof of Theorem B}]
Let $F(z)=f^{2}(z)L(g)-a(z)$, then we have
\begin{equation}\label{eqq2}
	2T(r,f)\leq T(r,F+a)+T\left(r,\frac{1}{L(g)}\right)+O(1)
\end{equation}
Using second fundamental theorem of Nevanlinna for three small functions, we have
\begin{equation*}
	T(r,F+a)\leq N(r,F+a)+N\left(r,\frac{1}{F+a}\right)+N\left(r,\frac{1}{F}
	\right)+S(r,F).
\end{equation*}
	This gives
\begin{equation}\label{eqq2a}
	T(r,F+a)\leq N(r,f^{2}L(g))+N\left(r,\frac{1}{f^{2}L(g)}\right)+N\left(r,\frac{1}{F}\right)+S(r,f)+S(r,L(g)).
\end{equation}
\begin{enumerate}
\item Let $L(g)=g(z+c)$, then applying $(1)$ part of Lemma \ref{llyimplem} to equation \eqref{eqq2a}, we have
\begin{equation}\label{eqq2b}
T(r,F+a)\leq N\left(r,\frac{1}{F}\right)+S(r,f)+S(r,g).
\end{equation}
Applying $(1)$ part of Lemma \ref{implem3} to equation \eqref{eqq2} and together with equation \eqref{eqq2b}, we have
\begin{equation}\label{eqq2c}
		2T(r,f)-T(r,g)\leq N\left(r,\frac{1}{F}\right)+S(r,f)+S(r,g).
\end{equation}
Similarly, let $G(z)=g^{2}(z)f(z+c)-a(z)$ then proceeding on similar lines as we did for $F(z)$ function, we get
 \begin{equation}\label{eqq2d}
 2T(r,g)-T(r,f)\leq N\left(r,\frac{1}{G}\right)+S(r,f)+S(r,g).
 \end{equation}
From equations \eqref{eqq2c} and \eqref{eqq2d}, we have
$$T(r,f)+T(r,g)\leq N\left(r,\frac{1}{F}\right)+N\left(r,\frac{1}{G}\right)+S(r,f)+S(r,g).$$ 
This implies that at least one of $F(z)=f^{2}(z)g(z+c)-a(z)$ and $G(z)=g^{2}(z)f(z+c)-a(z)$ have infinitely many zeros.

\item Let $L(g)=g(z+c)-g(z)$, then applying $(2)$ part of Lemma \ref{llyimplem} to equation \eqref{eqq2a}, we get equation \eqref{eqq2b}. Also applying $(2)$ part of Lemma \ref{implem3} to equation \eqref{eqq2} and together with equation \eqref{eqq2b}, we get equation \eqref{eqq2c}. Moreover, let $G(z)=g^{2}(z)(f(z+c)-f(z))-a(z)$, then with the same idea as for $F(z)$, we get equation \eqref{eqq2d}. Hence we obtain the required conclusion. 

\item Let $L(g)=g^{(k)}(z+c)$, then applying $(3)$ part of Lemma \ref{llyimplem}  and \ref{implem3} into the equations \eqref{eqq2}, \eqref{eqq2a} and \eqref{eqq2b}, we get equation \eqref{eqq2c}. Next, let $G(z)=g^{2}(z)f^{(k)}(z+c)-a(z)$, then with the same idea as for $F(z)$, we get equation \eqref{eqq2d}. Hence we obtain the required conclusion. 
\end{enumerate}
\end{proof}

\begin{proof}[\underline{Proof of Theorem C}]
Suppose that
\begin{equation}\label{eq3a}
f(z)=\alpha_{1}(z)e^{P_{1}(z)}-k_{1} ~ \text{and} ~
 g(z)=\alpha_{2}(z)e^{P_{2}(z)}-k_{2},
\end{equation}
 where $\alpha_{1}$, $\alpha_{2}$ are non-zero polynomials, $k_{1},k_{2}\in\mathbb{C}\setminus\{0\}$ and $P_{1}, P_{2}$ are non-constant polynomials say $P_{1}(z)=a_{m}z^{m}+a_{m-1}z^{m-1}+...+a_{0}$ and $P_{2}(z)=b_{n}z^{n}+b_{n-1}z^{n-1}+...+b_{0}$.\\
 Differentiating equation \eqref{eq3a} gives
 \begin{equation}\label{eq3b}
 f^{'}(z)=(\alpha_{1}^{'}+\alpha_{1}P_{1}^{'})e^{P_{1}(z)} ~ \text{and} ~ g^{'}(z)=(\alpha_{2}^{'}+\alpha_{2}P_{2}^{'})e^{P_{2}(z)}.
 \end{equation}	
From equation \eqref{eq3a} and \eqref{eq3b}, we have
\begin{align}\label{eq3c}
fg^{'}-a(z)&=(\alpha_{1}e^{P_{1}}-k_{1})(\alpha_{2}^{'}+\alpha_{2}P_{2}^{'})e^{P_{2}}-a(z) \nonumber	\\
&=(\alpha_{1}\alpha_{2}^{'}+\alpha_{1}\alpha_{2}P_{2}^{'})e^{P_{1}+P_{2}}-k_{1}(\alpha_{2}^{'}+\alpha_{2}P_{2}^{'})e^{P_{2}}-a(z) 
\end{align}
and
\begin{align}\label{eq3d}
gf^{'}-a(z)&=(\alpha_{2}e^{P_{2}}-k_{2})(\alpha_{1}^{'}+\alpha_{1}P_{1}^{'})e^{P_{1}}-a(z) \nonumber	\\
&=(\alpha_{2}\alpha_{1}^{'}+\alpha_{1}\alpha_{2}P_{1}^{'})e^{P_{1}+P_{2}}-k_{2}(\alpha_{1}^{'}+\alpha_{1}P_{1}^{'})e^{P_{1}}-a(z).
\end{align}
Now we discuss the following three cases:
\begin{enumerate}
\item If $\deg(P_{1})>\deg(P_{2})$, then equations \eqref{eq3c} and \eqref{eq3d} can be written as
\begin{equation}\label{eq3e}
fg^{'}-a(z)=H_{1}(z)e^{a_{m}z^{m}}+H_{0}(z)
\end{equation}
 and 
\begin{equation}\label{eq3f}
gf^{'}-a(z)=J_{1}(z)e^{a_{m}z^{m}}-a(z),
\end{equation}
where
\begin{align*}
 H_{1}(z)=&(\alpha_{1}\alpha_{2}^{'}+\alpha_{1}\alpha_{2}P_{2}^{'})e^{P_{2}+a_{m-1}z^{m-1}+...+a_{0}},\\ H_{0}(z)=&-(k_{1}\alpha_{2}^{'}+k_{1}\alpha_{2}P_{2}^{'})e^{P_{2}}-a(z),\\
J_{1}(z)=&[(\alpha_{2}\alpha_{1}^{'}+\alpha_{1}\alpha_{2}P_{1}^{'})e^{P_{2}}-(k_{2}\alpha_{1}^{'}+k_{2}\alpha_{1}P_{1}^{'})]e^{a_{m-1}z^{m-1}+...+a_{0}}.
\end{align*}
Applying Lemma \ref{whllem} and first main theorem of Nevanlinna into equations \eqref{eq3e} and \eqref{eq3f} give
$$N\left(r,\frac{1}{fg^{'}-a(z)}\right)=T(r,fg^{'}-a(z))+O(1)$$
and $$N\left(r,\frac{1}{gf^{'}-a(z)}\right)=T\left(r,gf^{'}-a(z)\right)+O(1).$$
Hence $\lambda(fg^{'}-a(z))=\rho(fg^{'}-a(z))$ and $\lambda(gf^{'}-a(z))=\rho(gf^{'}-a(z))$.
This implies that both $fg'-a(z)$ and $gf'-a(z)$ have infinite number of zeros.

\item If $\deg(P_{2})>\deg(P_{1})$, then equations \eqref{eq3c} and \eqref{eq3d} can be written as
\begin{equation}\label{eq3g}
	fg^{'}-a(z)=H_{3}(z)e^{b_{n}z^{n}}-a(z)
\end{equation}
and 
\begin{equation}\label{eq3h}
	gf^{'}-a(z)=J_{3}(z)e^{b_{n}z^{n}}+J_{0}(z),
\end{equation}
where
\begin{align*}
H_{3}(z)=&[(\alpha_{1}\alpha_{2}^{'}+\alpha_{1}\alpha_{2}P_{2}^{'})e^{P_{1}}-(k_{1}\alpha_{2}^{'}+k_{1}\alpha_{2}P_{2}^{'})]e^{b_{n-1}z^{n-1}+...+b_{0}},\\
J_{3}(z)=&(\alpha_{2}\alpha_{1}^{'}+\alpha_{1}\alpha_{2}P_{1}^{'})e^{P_{1}+b_{n-1}z^{n-1}+...+b_{0}},\\
J_{0}(z)=&-(k_{2}\alpha_{1}^{'}+k_{2}\alpha_{1}P_{1}^{'})e^{P_{1}}-a(z).
\end{align*}
Proceeding on similar manner as we deal with $(1)$ part, we obtain
that both $fg'-a(z)$ and $gf'-a(z)$ have infinite number of zeros from the equations  \eqref{eq3g} and \eqref{eq3h}.
\item If  $\deg(P_{1})=\deg(P_{2})$, then equations \eqref{eq3c} and \eqref{eq3d} become
\begin{equation}\label{eq3i}
fg^{'}-a(z)=H_{4}(z)e^{(a_{n}+b_{n})z^{n}}+H_{5}(z)e^{b_{n}z^{n}}-a(z)
\end{equation}
and
\begin{equation}\label{eq3j}
gf^{'}-a(z)=J_{4}(z)e^{(a_{n}+b_{n})z^{n}}+J_{5}(z)e^{a_{n}z^{n}}-a(z),
\end{equation}
where
\begin{align*}
H_{4}(z)=&(\alpha_{1}\alpha_{2}^{'}+\alpha_{1}\alpha_{2}P_{2}^{'})e^{(a_{n-1}+b_{n-1})z^{n-1}+...+(a_{0}+b_{0})},\\
H_{5}(z)=&-(k_{1}\alpha_{2}^{'}+k_{1}\alpha_{2}P_{2}^{'})e^{b_{n-1}z^{n-1}+...+b_{0}},\\
J_{4}(z)=&(\alpha_{2}\alpha_{1}^{'}+\alpha_{1}\alpha_{2}P_{1}^{'})e^{(a_{n-1}+b_{n-1})z^{n-1}+...+(a_{0}+b_{0})},\\
J_{5}(z)=&-(k_{2}\alpha_{1}^{'}+k_{2}\alpha_{1}P_{1}^{'})e^{a_{n-1}z^{n-1}+...+a_{0}}.
\end{align*}
Next we study the following subcases:
\begin{enumerate}[(i)]
\item If $a_{n}\neq \pm b_{n}$, then applying same reason to equations  \eqref{eq3i} and \eqref{eq3j} as we done for $(1)$ part, we obtain that both $fg'-a(z)$ and $gf'-a(z)$ have infinite number of zeros.
\item If $a_{n}=b_{n}$, then  proceeding the same logic as we applied in $(1)$ part, we get same conclusion from equations \eqref{eq3i} and \eqref{eq3j}.
\item If $a_{n}=-b_{n}$, then equations \eqref{eq3i} and \eqref{eq3j} become
\begin{equation}\label{eq3k}
fg^{'}-a(z)=H_{5}(z)e^{b_{n}z^{n}}+H_{4}(z)-a(z)
\end{equation}
and 
\begin{equation}\label{eq3l}
gf^{'}-a(z)=J_{5}(z)e^{-b_{n}z^{n}}+J_{4}(z)-a(z).
\end{equation}
\begin{enumerate}[(I)]
\item If $H_{4}(z)\not\equiv a(z)$ and $J_{4}(z)\not\equiv a(z)$, then applying same reason to equations \eqref{eq3k} and \eqref{eq3l} as we done for $(1)$ part, we obtain that both $fg'-a(z)$ and $gf'-a(z)$ have infinite number of zeros.
\item If $H_{4}(z)\equiv a(z)$, then $J_{4}(z)\not\equiv a(z)$. Otherwise $H_{4}(z)\equiv J_{4}(z)$, this gives $\alpha_{2}(z)-\alpha_{1}(z)=e^{P_{1}-P_{2}+C}$, which is not possible. Here $C$ is an arbitrary constant. By the simple observation, we get  $fg'-a(z)$ has finitely many zeros from equation \eqref{eq3k}. Also proceeding with the same logic as we applied in $(1)$ part, we obtain  $gf^{'}-a(z)$ has infinitely many zeros from the equation \eqref{eq3l}.
\item If $J_{4}(z)\equiv a(z)$, then proceeding with the same logic as we applied in (II) part, we get $fg'-a(z)$ has infinitely many zeros and $gf^{'}-a(z)$ has finitely many zeros.
\end{enumerate}
\end{enumerate}
\end{enumerate}
\end{proof}

\end{document}